\documentclass[reqno,a4paper,12pt]{amsart} 

\usepackage{amsmath,amscd,amsfonts,amssymb}
\usepackage{mathrsfs,dsfont}

\usepackage{tikz}

\addtolength{\intextsep}{0.5cm}
\addtolength{\textfloatsep}{0.5cm}

\numberwithin{equation}{section}
\numberwithin{figure}{section}

\addtolength{\topmargin}{-0.9cm}
\addtolength{\textheight}{1.8cm}
\addtolength{\hoffset}{-1.5cm}
\addtolength{\textwidth}{3cm}

\parskip .06in

\def\R{\mathbb{R}}

\def\Z{\mathbb{Z}}

\def\E{\mathscr{E}}
\def\F{\mathscr{F}}

\def\P{\mathscr{P}}

\def\lam{\lambda}

\def\1{\mathds{1}}
\def\eps{\varepsilon}

\renewcommand\leq{\leqslant}
\renewcommand\geq{\geqslant}

\newcommand{\ft}[1]{\widehat #1}
\newcommand{\dotprod}[2]{\langle #1 , #2 \rangle}

\newcommand{\vol}{\operatorname{vol}}
\newcommand{\Vol}{\operatorname{Vol}}

\theoremstyle{plain}
\newtheorem{thm}{Theorem}[section]
\newtheorem{lem}[thm]{Lemma}
\newtheorem{corollary}[thm]{Corollary}
\newtheorem{prop}[thm]{Proposition}

\newtheorem*{claim*}{Claim}

\newcommand{\thmref}[1]{Theorem~\ref{#1}}
\newcommand{\secref}[1]{Section~\ref{#1}}

\newcommand{\lemref}[1]{Lemma~\ref{#1}}

\newcommand{\propref}[1]{Proposition~\ref{#1}}

\newcommand{\corref}[1]{Corollary~\ref{#1}}

\theoremstyle{definition}

\newtheorem*{definition*}{Definition}
\newtheorem*{remarks*}{Remarks}
\newtheorem*{remark*}{Remark}
\newtheorem{remark}[thm]{Remark}

\newenvironment{enumerate-math}
{\begin{enumerate}
\addtolength{\itemsep}{5pt}
}
{\end{enumerate}}

\newenvironment{enumerate-text}
{\begin{enumerate}
\addtolength{\itemsep}{5pt}
}
{\end{enumerate}}

\begin{document}

 \title[Multi-tiling and equidecomposability of polytopes]
{Multi-tiling and equidecomposability of polytopes by lattice translates}

\author{Nir Lev}
\address{Department of Mathematics, Bar-Ilan University, Ramat-Gan 5290002, Israel}
\email{levnir@math.biu.ac.il}

\author{Bochen Liu}
\address{Department of Mathematics, Bar-Ilan University, Ramat-Gan 5290002, Israel}
\email{bochen.liu1989@gmail.com}

\thanks{Research supported by ISF grant No.\ 227/17 and ERC Starting Grant No.\ 713927.}
\subjclass[2010]{52B11, 52B45, 52C22}
\date{July 29, 2019}

\keywords{Polytopes, multi-tiling, equidecomposability, Hilbert's third problem}

\begin{abstract}
	We characterize the polytopes in $\R^d$ (not necessarily convex or connected ones)
	which multi-tile the space by translations along a given lattice. 
	We also give a necessary and sufficient condition for two polytopes in 
	$\R^d$ to be equidecomposable by lattice translations.
\end{abstract}

\maketitle


\section{Introduction} \label{secI1}

A \emph{simplex} in $\R^d$ is the convex hull of $d + 1$ points
 which do not all lie in some hyperplane. By a \emph{polytope} in $\R^d$ we
mean a set which
can be represented as the union of a finite number of simplices
with disjoint interiors. Remark that a polytope is not necessarily
a convex, nor even a connected, set.

By a \emph{lattice} in $\R^d$ we  refer to a discrete subgroup 
generated by $d$ linearly independent vectors.

\subsection{}
\label{secI1.A}
Let $A$ be a polytope in $\R^d$, and $L$ be a lattice in $\R^d$. 
The polytope $A$ is said to \emph{$k$-tile} by translations with respect to
$L$ (where $k$ is a positive integer) if almost every point of the space $\R^d$
 lies in exactly $k$ among the sets $A+\lambda$ $(\lam \in L)$, that is,
\begin{equation}
\label{eq:ztiling}
\sum_{\lam \in L} \chi_A(x - \lam) = k \quad \text{a.e.}
\end{equation}
where $\chi_A$ is the indicator function of $A$.
In this case, the number $k$ is called the \emph{level}, or the \emph{multiplicity}, of the tiling.

It is well-known that the
condition $\vol(A)  = k \det(L)$ is  necessary for  $k$-tiling
by translations of $A$ along $L$, where $\vol(A)$ denotes the volume
of $A$, and $\det(L)$ is the volume of some (or equivalently, of any) fundamental parallelepiped
of the lattice $L$.

We are concerned  with the following problem:

\emph{Given a polytope $A$ and a lattice $L$, formulate in effective 
terms a condition which is necessary and sufficient for the
translates of $A$ along $L$ to be a $k$-tiling.}

This problem was studied for some special classes of polytopes,
by several authors. For example, if $A$ is a convex polygon in two dimensions, 
then a necessary and sufficient condition for $k$-tiling was given by Bolle \cite{Bol94}. 
Kolountzakis \cite{Kol00} extended this result for a wider class of planar polygons,
which includes also non-convex ones. Gravin, Robins and Shiryaev proved in \cite{GRS12} 
that for a polytope $A \subset \R^d$  to $k$-tile by translations along a lattice $L$,
it is sufficient that $A$ be centrally symmetric, have centrally symmetric facets, 
and that all the vertices of $A$ lie in $L$. It was also proved in 
 \cite{GRS12} that if $A$ is convex, then 
the first two conditions, namely that $A$ be centrally symmetric and have centrally 
symmetric facets, are also necessary for $k$-tiling.

In this paper, we give a complete characterization of the polytopes $A \subset \R^d$
 (not necessarily convex or connected ones)  which tile at some level $k$ by translations along a lattice $L$.
We formulate this characterization in terms of \emph{Hadwiger functionals}, whose definition
will now be given.

\subsection{}
Let $r$ be an integer, $0 \leq r \leq d-1$, and suppose that
\begin{equation}
\label{eq:subsps}
V_r \subset V_{r+1} \subset \cdots \subset V_{d-1} \subset V_d = \R^d 
\end{equation}
is a sequence of affine subspaces such that $V_j$ has dimension $j$ (where an affine
 subspace means a translated linear subspace). Each subspace $V_j$ ($r \leq j \leq d-1$)
 in the sequence divides the next one $V_{j+1}$ into two half-spaces; we call one of them
 the positive half-space, and the other the negative half-space. Such a sequence of
 nested affine subspaces, endowed with a choice of positive and negative half-spaces, 
will be called an \emph{$r$-flag}, and will be denoted by $\Phi$.

Let $A$ be a polytope in $\R^d$, and suppose that $A$ has a sequence of faces 
\begin{equation}
\label{eq:faces}
F_r \subset F_{r+1} \subset \cdots \subset F_{d-1} \subset F_d = A,
\end{equation}
where $F_j$ is a $j$-dimensional face of $A$ which is contained in $V_j$
($r \leq j \leq d-1$). To each  face $F_j$  we assign a coefficient $\varepsilon_j$, given by
 $\varepsilon_j =+1$ if $F_{j+1}$ adjoins $V_j$ from the positive side, 
and $\varepsilon_j = -1$ if $F_{j+1}$ adjoins $V_j$ from the negative side. 
We then define 
\begin{equation}
\label{eq:weight}
\omega_{\Phi}(A) = \sum \varepsilon_{r} \varepsilon_{r+1} \cdots \varepsilon_{d-1} \Vol_r (F_r) ,
\end{equation}
where the sum goes through all sequences of faces of $A$ with the property above, and 
where $\Vol_r(F_r)$ denotes the $r$-dimensional volume of $F_r$. If no such a
 sequence of faces of $A$ exists, then we let $\omega_{\Phi}(A) = 0$.
In the case when $r=0$, by a $0$-dimensional face we mean a vertex of $A$, 
and its $0$-dimensional volume is defined to be $1$. 

Finally, let $L$ be a lattice in $\R^d$. For each $r$-flag $\Phi$ ($0 \leq r \leq d-1$) we define
\begin{equation}
\label{eq:hadwiger}
H_{\Phi}(A,L) = \sum_{\Psi} \omega_{\Psi}(A) , 
\end{equation}
where $\Psi$ runs through all $r$-flags for which there exists $\lam \in L$ such that $\Psi$
can be obtained from $\Phi$ by translating all the 
affine subspaces in \eqref{eq:subsps}, as well as the positive and negative half-spaces, 
 by the vector $\lam$.
(Remark that each $r$-flag $\Psi$ in the sum is taken into account only once, 
even if there is more than a single vector $\lam \in L$ which carries $\Phi$ onto $\Psi$.)
  Notice that there can be only finitely many nonzero terms in the sum \eqref{eq:hadwiger},
hence this sum has a well-defined value.
We will call $H_{\Phi}$ the \emph{Hadwiger functional} associated to the $r$-flag $\Phi$.

We can now state our result that characterizes the polytopes $A$ whose translates along a lattice $L$
form a $k$-tiling:

\begin{thm}
\label{thmA1.1}
Let $A$ be a polytope in $\R^d$, and $L$ be a lattice in $\R^d$. Then $A$ tiles
at some level $k$ by translations with respect to $L$, if and only if
\begin{equation}
\label{eq:A1.1.1}
H_{\Phi}(A,L) = 0
\end{equation}
for every $r$-flag $\Phi$ $(0 \leq r \leq d-1)$.
\end{thm}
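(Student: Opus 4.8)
The plan is to prove the following slightly stronger statement, which is better suited to an induction: for an arbitrary \emph{virtual polytope} $f=\sum_i c_i\chi_{A_i}$ (a finite integer combination of indicators of polytopes $A_i$), with $\omega_\Phi$ and $H_\Phi$ extended to $f$ by linearity, the periodization $P_f(x)=\sum_{\lam\in L}f(x-\lam)$ is almost everywhere constant if and only if $H_\Phi(f,L)=0$ for every flag $\Phi$. For a genuine polytope $A$ this recovers the theorem immediately: the periodization $P=\sum_{\lam}\chi_A(\cdot-\lam)$ is $\Z$-valued with $\int_{\R^d/L}P=\vol(A)>0$, so ``$P$ constant a.e.'' is the same as ``$P\equiv k$ for a positive integer $k$'', which is exactly a $k$-tiling. (Equivalently, one could phrase the constancy of $P$ as the vanishing of $\ft\chi_A$ on the nonzero points of the dual lattice, but I will work geometrically.) The argument is an induction on the dimension $d$.

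In the base case $d=1$ a polytope is a finite union of intervals, the only flags are $0$-flags, and $\omega_\Phi(A)$ is the signed number of endpoints of $A$ at the marked point (a left endpoint counting $+1$ and a right endpoint $-1$, according to the side on which $A$ lies). Thus $H_\Phi(A,L)$ is the net signed count of endpoints of the translates $A+\lam$ at that point, which is exactly the jump of $P$ there; so $P$ is constant iff all these jumps vanish, i.e. iff $H_\Phi(A,L)=0$ for every $0$-flag.

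For the inductive step, assume the strengthened statement in dimension $d-1$. The function $P$ is a locally constant $\Z$-valued $BV$ function whose distributional gradient is the periodized boundary measure $\nabla P=-\sum_n n\,g_n\,d\mathcal H^{d-1}$, the sum running over the finitely many facet-normals $n$ and $g_n$ being the integer-valued net signed $(d-1)$-area density of those facets of the translates $A+\lam$ that lie in hyperplanes orthogonal to $n$. Since at $\mathcal H^{d-1}$-a.e.\ point of such a hyperplane no facet of a different normal passes, we have $P$ constant $\iff\nabla P=0\iff g_n\equiv0$ for every $n$. Now fix $n$: restricting to the hyperplanes orthogonal to $n$ and using the sublattice $L_n=L\cap n^{\perp}$ together with the finitely many cosets of these hyperplanes, $g_n$ is the $L_n$-periodization of a virtual $(d-1)$-polytope $G_n$ inside a hyperplane $\cong\R^{d-1}$. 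The key computation, which I expect to be the main technical step, is the identity $H_\Phi(A,L)=H_{\Phi'}(G_n,L_n)$ whenever $\Phi$ is a flag in $\R^d$ whose top hyperplane $V_{d-1}$ is orthogonal to $n$ and $\Phi'$ is the induced $(d-1)$-dimensional flag $V_r\subset\cdots\subset V_{d-1}$: the sign $\varepsilon_{d-1}$ of the top step of $\Phi$ is precisely the sign with which the corresponding facet enters $g_n$, while the remaining signs $\varepsilon_r\cdots\varepsilon_{d-2}$ and the volume $\Vol_r(F_r)$ are intrinsic to the hyperplane and reproduce the $(d-1)$-dimensional Hadwiger functional of $G_n$, with the flags having $r=d-1$ accounting separately for the average (the ``$(d-1)$-volume'') of $g_n$. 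Granting this, vanishing of all $H_\Phi(A,L)$ with $V_{d-1}\perp n$ is equivalent, by the induction hypothesis applied to $G_n$, to $G_n$ having constant $L_n$-periodization, and the $r=d-1$ flags then force that constant to be $0$; hence $g_n\equiv0$. Running this over all $n$ closes the induction.

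The crux, and the step I expect to be the main obstacle, is the matching identity relating the Hadwiger functionals of $A$ in $\R^d$ (for flags capped by a hyperplane orthogonal to $n$) to those of the induced $(d-1)$-dimensional virtual polytope $G_n$. This demands a careful analysis of how a chain of faces $F_r\subset\cdots\subset F_{d-1}$ of $A$ sits inside the facet data recorded by $g_n$, a correct tracking of all the signs $\varepsilon_j$ (in particular that $\varepsilon_{d-1}$ records the side of $A$ and hence the sign in $g_n$), and a verification that the hyperplane cosets and the sublattice $L_n$ package the periodization correctly. The induction also genuinely needs the strengthened virtual-polytope form one dimension down, since $G_n$ is a signed combination. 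By contrast, the reduction of tiling to constancy of $P$, the elementary fact that a $\Z$-valued $BV$ function with vanishing gradient on the connected space $\R^d$ is constant, and the one-dimensional base case are all routine.
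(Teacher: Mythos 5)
Your strategy --- induction on the ambient dimension via the $BV$ gradient of the periodization --- is genuinely different from the paper's proof, which is Fourier-analytic and inducts on the flag index $r$ with $d$ and $L$ held fixed. And your matching identity relating $H_\Phi(A,L)$ (for flags capped by a hyperplane orthogonal to $n$) to the Hadwiger data of the facet density is essentially correct. But the induction as you set it up does not close, and the failure is at a point you do not flag: the group $L_n=L\cap n^{\perp}$ is only a \emph{discrete subgroup} of the hyperplane $n^{\perp}$; it need not be a full-rank lattice there, and it can even be trivial. Your strengthened inductive statement one dimension down is formulated, like the theorem itself, for a lattice in $\R^{d-1}$, so ``the induction hypothesis applied to $G_n$'' is simply not available when $\operatorname{rank}(L_n)<d-1$. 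This situation is not exotic and cannot be excluded: take $d=2$, $L=\Z^2$, and $A$ the parallelogram spanned by $(1,0)$ and $(\sqrt2,1)$. Then $A$ tiles $\R^2$ at level $1$ by $\Z^2$-translates, yet for the normal $n$ of the edges parallel to $(\sqrt2,1)$ one has $L\cap n^{\perp}=\{0\}$; so even to verify the theorem for this tiling parallelogram your inductive step needs the one-dimensional statement for the \emph{trivial} group rather than for a lattice. The auxiliary step ``the $r=d-1$ flags force the constant to be $0$'' has the same defect, since it rests on $c\cdot\det(L_n)=\Vol_{d-1}(G_n)$, which presumes $L_n$ of full rank.

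The gap is fixable, but the fix is a further strengthening that must be threaded through the entire argument: the inductive statement has to be proved for an arbitrary discrete subgroup $\Lambda$ of $\R^m$ of any rank $0\le\rho\le m$, with the observation that when $\rho<m$ a constant $\Lambda$-periodization of a virtual polytope is automatically zero (it is supported in a slab transverse to the span of $\Lambda$). In particular the rank-zero case --- a nonzero virtual polytope always has some nonvanishing flag weight --- is itself a nontrivial injectivity statement that your induction must supply, and the matching identity plus the facet/coset bookkeeping must then be carried out for arbitrary discrete groups, separately for each $L$-orbit of facet-bearing hyperplanes $W$. None of this appears in your outline, which explicitly acknowledges only the need for virtual polytopes one dimension down, not for rank-deficient groups. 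It is worth noting how the paper sidesteps the issue entirely: its differentiation step (the divergence theorem applied to $\overline{e_\xi}$ on each $r$-face, equations \eqref{div-thm}--\eqref{eq:D1.20}) relates flag measures taken in the \emph{same} ambient space $\R^d$ and with respect to the \emph{same} lattice $L$, lowering only the flag index $r$; consequently no intersected subgroup ever arises in its induction.
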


Notice that in order to check the condition in this result,  it is actually enough to 
verify \eqref{eq:A1.1.1} for finitely many flags $\Phi$.\footnote{
Indeed,  it is enough to verify \eqref{eq:A1.1.1} only for flags $\Phi$ 
that can be obtained from some sequence of faces as in \eqref{eq:faces},
by taking each subspace $V_j$ to be the affine hull of the  face $F_j$ $(r \leq j \leq d-1)$.
So the number of verifications needed is bounded by the number of such sequences
of faces.}
Hence, only a finite number of verifications is in fact needed.
Moreover, the value of each $H_{\Phi}(A,L)$ depends on $A$ and $L$
in an elementary way. So the condition in \thmref{thmA1.1} can be effectively checked 
in concrete situations.

In \secref{sec:examples}, we will illustrate the use of \thmref{thmA1.1} 
 by analyzing  some examples. In particular, 
the above mentioned results  from \cite{Bol94}, \cite{Kol00} and \cite{GRS12}
will be recovered based on \thmref{thmA1.1}.

\subsection{}
The $k$-tiling problem is closely related to another subject in discrete geometry, namely,
the theory of equidecomposability of polytopes by rigid motions. This is a classical
subject, which goes back to Hilbert's third problem (one of the famous 23 problems posed by
Hilbert at the International Congress of Mathematicians in 1900).

If $A$ and $B$ are two polytopes in $\R^d$, then they are said to be \emph{equidecomposable}
 (or \emph{dissection equivalent}, or \emph{scissors congruent}) if the polytope $A$
 can be partitioned, up to measure zero, into a finite number of smaller polytopes which can be 
rearranged using rigid motions to form, again up to measure zero, a partition of the polytope $B$.

It is obvious that if two polytopes $A$ and $B$ are equidecomposable, then they must
have the same volume. Hilbert's third problem was concerned with the converse assertion:
are any two  polytopes $A$ and $B$ of the same volume
equidecomposable? It has been  known earlier that in  two dimensions, 
any two polygons of equal area are equidecomposable, but in the same year 1900
it was shown by Dehn that in three dimensions such a result is no longer true (see the book
\cite{Bol78} for a comprehensive exposition).

The problem of equidecomposability of polytopes has also been studied when extra restrictions
are imposed on the way in which the pieces of the partition are allowed to be rearranged. 
If $G$ is a group of rigid motions of $\R^d$, then two polytopes $A$ and $B$ are said to be 
\emph{$G$-equidecomposable} if $A$ can be partitioned into a finite number of polytopes
which can be rearranged, using only motions from $G$, to form a partition of $B$
(where as before, the pieces of each partition may overlap but only up
to measure zero).

In this paper, we consider the following question:

\emph{Given two polytopes $A,B$ and a lattice $L$, under what condition 
$($that can be checked effectively$)$ are $A$ and $B$
equidecomposable using only translations by vectors from $L$?}

Our next result characterizes precisely when such an equidecomposition exists.
To state the result, we again use the Hadwiger functionals $H_{\Phi}$ described above.

\begin{thm}
\label{thmA2.1}
Let $A$ and $B$ be two polytopes in $\R^d$, and let $L$ be a lattice in $\R^d$. Then $A$ and $B$
are equidecomposable with respect to translations by vectors from $L$, if and only if $A$ and $B$ have the same
volume and
\begin{equation}
\label{eq:A2.1.1}
H_{\Phi}(A,L) = H_{\Phi}(B,L) 
\end{equation}
for every $r$-flag $\Phi$ $(0 \leq r \leq d-1)$.
\end{thm}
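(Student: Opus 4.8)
The plan is to prove the two implications separately: necessity will follow from the fact that the functionals $H_\Phi(\cdot,L)$ are invariants of $L$-equidecomposition, while sufficiency will be reduced to the tiling criterion of \thmref{thmA1.1}. Throughout I write $P\sim_L P'$ to mean that the polytopes $P,P'$ are equidecomposable using translations by vectors of $L$.

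For the necessity I would first record two structural properties of the Hadwiger functionals. The first is invariance under lattice translations: since translating $A$ by $\lam$ has the same effect on each weight as translating the flag by $-\lam$, one has $\omega_\Psi(A+\lam)=\omega_{\Psi-\lam}(A)$, and because the sum in \eqref{eq:hadwiger} runs over the whole $L$-orbit of $\Phi$ it follows that $H_\Phi(A+\lam,L)=H_\Phi(A,L)$ for all $\lam\in L$. The second is additivity: if a polytope is cut into two polytopes $A,A'$ with disjoint interiors, then the faces created along the cut contribute to the two weights with opposite signs and cancel in the orbit sum, so that $H_\Phi(A\cup A',L)=H_\Phi(A,L)+H_\Phi(A',L)$. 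Granting these, if $A=\bigsqcup_i A_i$ and $B=\bigsqcup_i(A_i+\lam_i)$ with $\lam_i\in L$ is an $L$-equidecomposition, then $\vol(A)=\vol(B)$, and additivity together with translation invariance give $H_\Phi(B,L)=\sum_i H_\Phi(A_i+\lam_i,L)=\sum_i H_\Phi(A_i,L)=H_\Phi(A,L)$, which is \eqref{eq:A2.1.1}.

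For the sufficiency, assume $\vol(A)=\vol(B)$ and that \eqref{eq:A2.1.1} holds. The idea is to produce a single auxiliary polytope $Q$ whose Hadwiger data cancels that of $A$, and then apply \thmref{thmA1.1}. Let $D$ be a fundamental parallelepiped of $L$ and let $P_0=\bigcup_{\lam\in F}(D+\lam)$ be a large box, with $F\subset L$ a finite set chosen large enough that some lattice translate $A_0=A+\lam_0$ lies in $P_0$. Since $P_0$ tiles with respect to $L$, \thmref{thmA1.1} gives $H_\Phi(P_0,L)=0$ for every $\Phi$; putting $Q=\overline{P_0\setminus A_0}$, additivity and translation invariance give $H_\Phi(Q,L)=-H_\Phi(A_0,L)=-H_\Phi(A,L)=-H_\Phi(B,L)$. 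Now $A_0\sqcup Q=P_0$ tiles at level $|F|$, while if we place $B$ disjointly from $P_0$ then $H_\Phi(B\sqcup Q,L)=H_\Phi(B,L)+H_\Phi(Q,L)=0$ for every $\Phi$, so \thmref{thmA1.1} shows that $B\sqcup Q$ tiles; comparing volumes, $\vol(B\sqcup Q)=\vol(A_0)+\vol(Q)=\vol(P_0)=|F|\det(L)$, so in fact $A_0\sqcup Q$ and $B\sqcup Q$ tile at the same level $|F|$.

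It remains to pass from equal tiling level to equidecomposability and then to cancel $Q$. For the first step I would prove a folding lemma: if two polytopes $P,P'$ both $k$-tile with respect to $L$, then $P\sim_L P'$. One partitions each of $P,P'$ into the pieces lying in the cells $D+\lam$, folds every piece back into $D$ by the corresponding lattice translation, and passes to a common refinement of $D$; over each refined cell the $k$-tiling hypothesis yields exactly $k$ sheets coming from $P$ and $k$ sheets coming from $P'$, and matching the sheets in pairs moves them into coincidence by lattice vectors, producing the required $L$-equidecomposition. Applying this to $A_0\sqcup Q$ and $B\sqcup Q$ gives $A_0\sqcup Q\sim_L B\sqcup Q$; since the summand $Q$ is literally the same set in both, the cancellation law for equidecomposability (Zylev's theorem, applied to the group of $L$-translations) yields $A_0\sim_L B$, and as $A_0$ is a lattice translate of $A$ we conclude $A\sim_L B$. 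I expect the two supporting results in this last direction to be the real work: verifying that $H_\Phi(\cdot,L)$ is a genuine valuation (the cut faces must be shown to cancel in the orbit sum \eqref{eq:hadwiger}, not merely in the individual weights $\omega_\Psi$), and justifying the cancellation of $Q$, for which one either invokes Zylev's theorem or reproves the relevant cancellation law in the lattice-translation setting.
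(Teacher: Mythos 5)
Your proposal is correct in outline but follows a genuinely different route from the paper's. The paper never introduces an auxiliary complement: it extends the Hadwiger functionals to the polytope group $\P^d$, proves that an element $P\in\P^d$ tiles at some level if and only if all $H_\Phi(P,L)$ vanish (\thmref{thmC3.1}, where all the analytic work --- the Fourier/divergence-theorem induction on flags --- lives), applies this to $P=[A]-[B]$ (equal volumes force the tiling level to be zero), and then converts level-zero tiling of $[A]-[B]$ into an actual equidecomposition by an induction on the finite overlap set $S(A,B)=\{\lam\in L : \vol(A\cap(B-\lam))>0\}$ (\propref{propD1.2}). You instead treat \thmref{thmA1.1} as a black box and deduce \thmref{thmA2.1} from it by purely geometric means: embed a lattice translate $A_0$ of $A$ into a finite union $P_0$ of fundamental cells, pass to the complement $Q$, observe that $A_0\sqcup Q=P_0$ and $B'\sqcup Q$ (with $B'$ a far-away \emph{lattice} translate of $B$ --- this must be a lattice translation, both to preserve the invariants and to conclude $A\sim_L B$ at the end, but that is easily arranged) tile at the same level, and finish by folding plus cancellation. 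Your necessity argument is the same as the paper's; your folding lemma is correct as sketched and is exactly one direction of the paper's \propref{propA2.2}, which the paper derives from \propref{propD1.2} rather than proving directly. What your route buys is a clean structural statement --- \thmref{thmA2.1} is an elementary geometric consequence of \thmref{thmA1.1}; what it loses is the polytope-group refinements (Theorems \ref{thmC3.1} and \ref{thmE1.1}) that come for free on the paper's route.

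The one step you should not leave as a citation is the cancellation of $Q$. Zylev's theorem cannot be quoted off the shelf for the group of $L$-translations: the classical formulations are proved for rich groups of motions (all translations or all isometries), and the abstract versions carry hypotheses that must be verified for a restricted, discrete group --- and group-restriction issues are exactly what is delicate in this subject, since the analogous completeness question for general subgroups of $\R^d$ is open. Note also that you cannot obtain cancellation from the invariants themselves, since that would invoke the theorem being proved and be circular. Fortunately, for a lattice the cancellation law is true and admits a direct proof in which discreteness is the saving point: given a piecewise-$L$-translation bijection $h:A_0\sqcup Q\to B'\sqcup Q$, follow each $x\in A_0$ under iteration of $h$ until it exits into $B'$; all orbit points lie in $x+L$ and, before exit, in $Q$, and since $(x+L)\cap Q$ contains at most $N$ points, where $N$ is the number of lattice cells meeting $Q$ (independent of $x$), the exit time is uniformly at most $N+1$. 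The first-exit map is then a bijection of $A_0$ onto $B'$, a.e.\ defined, with finitely many polytope pieces each moved by a lattice vector, i.e.\ $A_0\sim_L B'$. (For a dense subgroup this argument breaks down precisely because exit times become unbounded.) With this lemma supplied --- or, equivalently, with an induction on the overlap set as in the paper's proof of \propref{propD1.2}, which is the same cancellation in disguise --- your proof is complete.
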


As was the case with \thmref{thmA1.1}, also the condition in this result is elementary enough 
so that it can be effectively checked in concrete situations.

\subsection{}
In order to understand the role played by the Hadwiger functionals in the theory of equidecomposability 
of polytopes, it is necessary to be acquainted with an important notion in
this theory, namely, the notion of \emph{additive invariants}. 

Let $G$ be a group of rigid motions of $\R^d$. A function $\varphi$, defined on the set of all polytopes in 
$\mathbb R^d$, is said to be an \emph{additive $G$-invariant} if (i) it is additive, namely, if $A$ and $B$
 are two polytopes with disjoint interiors then $\varphi(A \cup B) = \varphi(A) + \varphi(B)$; 
and (ii) it is invariant under motions from the
group $G$, that is, $\varphi(A) = \varphi(g(A))$ whenever $A$ is a polytope and $g \in G$.

It is obvious that for two polytopes $A$ and $B$ to be $G$-equidecomposable, a necessary  
condition is that $\varphi(A) = \varphi(B)$ for any additive $G$-invariant $\varphi$. A general problem 
is to construct a ``complete system'' of additive $G$-invariants, that is, invariants which together provide a 
condition which is both necessary and
sufficient for two polytopes of the same volume to be $G$-equidecomposable.

In his solution to Hilbert's third problem, Dehn constructed an additive invariant with respect to the group of all 
rigid motions of $\R^3$, which allowed him to show that a regular tetrahedron and a cube of the
same volume are not equidecomposable \cite{Deh01}. Dehn invariants for polytopes in $\R^d$ have also been studied
\cite{Had54}, and shown to form a complete system in dimensions $d=3,4$ \cite{Syd65, Jes72}. 
It remains an open problem as to whether these invariants are complete also in dimensions $d \geq 5$.

Additive invariants with respect to the \emph{group of translations} of $\R^d$ were introduced by Hadwiger
\cite{Had52, Had57}. It was proved that these invariants form a complete system, so that 
together they provide a necessary and sufficient condition for two polytopes of the same volume to be 
equidecomposable by translations. This was proved by Hadwiger and Glur in dimension two \cite{HG51}, 
by Hadwiger in dimension three \cite{Had68}, and by Jessen and Thorup \cite{JT78}, and independently 
Sah \cite{Sah79}, in every dimension. 

The question which concerns us in this paper, involves equidecomposability with respect to the group 
of translations by vectors belonging to a lattice $L$. 
How to construct additive invariants with respect to this group of motions?
We can use the fact (observed by Hadwiger)  that if $\Phi$ is an $r$-flag ($0 \leq r \leq d-1$), 
then the  function $\omega_{\Phi}$ defined by
\eqref{eq:weight} is an additive function on the set of all polytopes in $\R^d$ 
(see e.g.\ \cite[Section 2.6]{Sah79}). Hence the Hadwiger 
functional $H_\Phi(\,\cdot\, , L)$ defined by \eqref{eq:hadwiger} is easily seen to be
an additive invariant with respect to translations by vectors from $L$.

In fact, this construction is valid for any subgroup $L$ of $\R^d$ (that is, not only for 
a lattice). In particular, if $L = \R^d$, then the obtained invariants are the classical ones
introduced by Hadwiger. In the case when $L$ is a proper subgroup of $\R^d$, the
invariants $H_\Phi(\,\cdot\, , L)$ were first considered in \cite[Section 5]{GL15}.

This clarifies why the ``only if'' part of \thmref{thmA2.1} is true. Indeed,
for two polytopes 
$A$ and $B$ to be equidecomposable using translations by vectors from $L$, it is
necessary  that $\varphi(A) = \varphi(B)$ 
for any additive function $\varphi$ which is invariant with respect to translations 
by vectors from $L$ (and in particular,
$A$ and $B$ must have the same volume).
 In fact, the same is true 
for any subgroup $L$ of $\R^d$, not necessarily a lattice.

It is therefore the ``if'' part of \thmref{thmA2.1} which constitutes the main result.
The result can be equivalently stated by saying that if $L$ is a lattice, then  the
Hadwiger functionals  $H_\Phi(\,\cdot\, , L)$ form a  \emph{complete system}  of invariants
with respect to the group of translations by vectors from $L$. 
Our proof of this result is based on completely different considerations than the proofs in 
\cite{HG51, Had68, JT78, Sah79}, where it is shown that the classical Hadwiger 
invariants $H_\Phi(\,\cdot\, , \R^d)$ are complete with respect to the group of all the translations  of $\R^d$.

The question as to whether the Hadwiger invariants $H_\Phi(\,\cdot\, , L)$
are complete when $L$ is a proper subgroup of $\R^d$, was raised in \cite[Section 7.1]{GL15}.
\thmref{thmA2.1} provides an affirmative answer to this question
 in the case when  $L$ is a lattice. For a general subgroup $L$, the problem  remains open.

\subsection{}
In all that has been said so far, we have still not explained what is
the relation between the two problems that have been discussed,
that is, the $k$-tiling problem and the equidecomposability problem, both considered
with respect to lattice translates.

 The following elementary fact demonstrates the
relation between these two problems:

\begin{prop}
\label{propA2.2}
Let $A$ and $B$ be two polytopes in $\R^d$, and let $L$ be a lattice in $\R^d$.
Assume that $B$ tiles at some level $k$ by translations along $L$. Then the translates
of $A$ along $L$ is also a $k$-tiling, if and only if $A$ and $B$  are equidecomposable with respect 
to translations by vectors from $L$.
\end{prop}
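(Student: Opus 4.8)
The plan is to reduce the statement to a simple additivity relation for indicator functions combined with the known volume/multiplicity normalizations. Let me denote by $f_A(x) = \sum_{\lam \in L} \chi_A(x - \lam)$ and $f_B(x) = \sum_{\lam \in L} \chi_B(x - \lam)$ the ``covering multiplicity'' functions of $A$ and $B$ with respect to $L$. By hypothesis $B$ is a $k$-tiling, so $f_B \equiv k$ a.e. The claim to be proved is: $f_A \equiv k$ a.e.\ if and only if $A$ and $B$ are $L$-equidecomposable. I will prove the two implications separately.

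For the ``if'' direction, suppose $A$ and $B$ are $L$-equidecomposable. Then $A$ can be partitioned (up to measure zero) into pieces $P_1, \dots, P_n$, and there are lattice vectors $t_1, \dots, t_n \in L$ such that the translates $P_i + t_i$ partition $B$ (again up to measure zero). The key observation is that the multiplicity function is invariant under $L$-translation of the piece: since $L$ is a group, translating a set by a vector of $L$ permutes the terms in the sum defining $f$, so $f_{P_i + t_i} = f_{P_i}$ (as functions, shifted back) — more precisely, $\sum_{\lam} \chi_{P_i + t_i}(x - \lam) = \sum_\lam \chi_{P_i}(x - t_i - \lam) = \sum_{\lam'} \chi_{P_i}(x - \lam')$ after reindexing $\lam' = \lam + t_i$. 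Because the multiplicity function is additive over a measure-disjoint union of pieces, summing over $i$ gives $f_A = \sum_i f_{P_i} = \sum_i f_{P_i + t_i} = f_B = k$ a.e. Hence $A$ is a $k$-tiling.

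For the ``only if'' direction, suppose $A$ is also a $k$-tiling. The goal is now to produce an explicit $L$-equidecomposition between $A$ and $B$. The natural approach is to fix a fundamental domain $D$ of $L$ (a half-open fundamental parallelepiped), and to fold both $A$ and $B$ down into $D$ by translating each point back by the unique lattice vector that lands it in $D$. Concretely, for almost every $y \in D$ the condition $f_A(y) = k$ says that exactly $k$ of the translates $y + \lam$ lie in $A$; partitioning $A$ according to which ``sheet over $D$'' each point belongs to expresses $A$, up to measure zero, as a disjoint union of $L$-translates of $k$ subsets of $D$ whose indicator functions sum to $\chi_D$ repeated $k$ times — and the identical statement holds for $B$. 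This exhibits both $A$ and $B$ as ``stacks'' of total height $k$ over the common base $D$, and the pieces of $A$ can be reassembled over $D$ and then re-lifted to form $B$, all using only vectors from $L$. I expect this folding-and-reassembling argument to be the main obstacle, since one must carry out the bookkeeping of the overlapping sheets carefully and verify that the resulting pieces are genuine polytopes (finite unions of simplices) rather than merely measurable sets; the measure-theoretic ``a.e.'' equalities must be upgraded to an honest finite polytopal dissection. This is where the hypothesis that $A$ and $B$ are polytopes, and that $L$ is a lattice, does the real work, and it is the only step that is more than a formal manipulation.
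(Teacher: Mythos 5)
Your ``if'' direction is fine, and it is essentially the same argument the paper uses: translating a piece by a vector of $L$ leaves the covering function $f(x)=\sum_{\lam\in L}\chi(x-\lam)$ unchanged, and the covering function is additive over a measure-disjoint decomposition. The gap is in the ``only if'' direction, and you have flagged it yourself: the folding-and-reassembling step, which you defer as ``the main obstacle,'' is not a technicality --- it \emph{is} the proposition, and nothing in your proposal overcomes it. Two concrete problems. First, the sheets $\left(A\cap(D+\lam)\right)-\lam$ need not be $k$ in number; only the pointwise multiplicity equals $k$, so $A$ is not literally a stack of $k$ subsets of $D$, and the matching between the sheets of $A$ and those of $B$ must vary from region to region of $D$. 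Second, and more seriously, an $L$-equidecomposition requires, for a.e.\ $y\in D$, a bijection between the $k$-element sets $S_A(y)=\{\lam\in L: y+\lam\in A\}$ and $S_B(y)=\{\mu\in L: y+\mu\in B\}$ that is \emph{piecewise constant on finitely many polytopal cells}: one has to show that $D$ decomposes, up to measure zero, into finitely many polytopes on which both patterns $S_A(\cdot)$ and $S_B(\cdot)$ are constant, then choose a bijection cell by cell and check that the lifted pieces partition $A$ and $B$ up to measure zero. This can be done (it uses that $A,B$ are polytopes and that only finitely many lattice translates meet $D$), but as written your ``only if'' half is a plan, not a proof.

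For comparison, the paper sidesteps the sheet bookkeeping entirely. It first proves (\propref{propD1.2}) that $A$ and $B$ are $L$-equidecomposable if and only if the element $[A]-[B]$ of the polytope group tiles at level zero, arguing by induction on the cardinality of the finite set $S(A,B)=\{\lam\in L:\vol(A\cap(B-\lam))>0\}$: pick $\lam\in S(A,B)$, peel off the matched pair $A'=A\cap(B-\lam)$ and $B'=A'+\lam$ (which are automatically polytopes, so no upgrading of measurable sets is ever needed), and observe that the remainders $A''$, $B''$ still satisfy the level-zero tiling condition while $S(A'',B'')\subset S(A,B)\setminus\{\lam\}$ is strictly smaller. \propref{propA2.2} is then immediate, because when $B$ tiles at level $k$, the statement ``$A$ tiles at level $k$'' is equivalent to ``$[A]-[B]$ tiles at level zero.'' If you want to salvage your route, you must carry out steps analogous to the cell decomposition described above; the paper's induction is what produces the finite polytopal dissection for free.
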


An interesting case to which this proposition applies is when $B$ is the union of 
$k$ disjoint  fundamental parallelepipeds of the lattice $L$. It is obvious that such
a polytope $B$ tiles at level $k$ by translations along $L$. It is also
easy to check that $H_{\Phi}(B,L) = 0$ for every $\Phi$. Hence, one can see
that \thmref{thmA2.1} and \propref{propA2.2} imply the ``only if'' part of
\thmref{thmA1.1}, namely,  for a polytope $A$
to $k$-tile by translations along a lattice $L$, it is  necessary  that $H_{\Phi}(A,L) = 0$ 
for every $\Phi$.

On the other hand, the ``if'' part of \thmref{thmA1.1} does not follow 
from  \thmref{thmA2.1} and \propref{propA2.2} in a similar way. 
The missing  ingredient which is needed for this purpose, is a result which states
that the vanishing of all the Hadwiger functionals $H_{\Phi}(A,L)$
implies that the volume of $A$ must be an integral multiple of $\det(L)$.

In order to prove Theorems \ref{thmA1.1} and \ref{thmA2.1}, it is
natural to realize the set of all polytopes as a subset 
of a certain  group, called the \emph{polytope group} in $\R^d$.
We will prove a theorem, which is in fact  the main result 
of this paper  (\thmref{thmC3.1}), that characterizes the elements of the 
polytope group  in $\R^d$ whose translates along a lattice $L$ form a $k$-tiling.
Theorems \ref{thmA1.1} and \ref{thmA2.1} will be obtained as
consequences of this result.


\section{Examples}
\label{sec:examples}

In this section we consider some specific classes of polytopes, and obtain
conditions for the polytopes in these classes to $k$-tile by 
lattice translates. The examples include finite unions of intervals in 
dimension one, convex polytopes in dimensions two and three,
and some other classes.

In analyzing these examples, we will  illustrate how to use the characterization
given in \thmref{thmA1.1} of the polytopes  which $k$-tile by lattice translates. 
In particular we will recover the results  from 
\cite{Bol94}, \cite{Kol00} and \cite{GRS12} mentioned above.

Similar examples were analyzed in \cite[Section 5]{GL15}, where the Hadwiger 
functionals were studied not with respect to lattice translations, but with respect to 
translations along a countable, dense subgroup of $\R^d$ generated by $d+1$ vectors.
Some of the results of this section are analogous to those discussed in \cite{GL15}.

\subsection{}
The simplest situation to which \thmref{thmA1.1} applies, takes place in dimension one, where
a ``polytope'' $A$ is a finite union of disjoint closed intervals  $[ a_j , b_j ]$, and a ``lattice''
$L$ is an arithmetic progression passing through the origin.

In dimension one, a  $0$-flag $\Phi$ is determined by a single point $x$, that divides $\R$
 into a positive and a negative half-line. The value of the Hadwiger functional 
$H_\Phi(A,L)$ is then equal to the difference between the number of left endpoints $a_j$,
 and the number of right endpoints $b_j$, which belong to the set $L + x$.
 Hence, it follows from  \thmref{thmA1.1} that  for $A$ to tile at some level $k$ by translations
with respect to $L$, it is necessary and sufficient that any translate of $L$ contain the same 
number of left and right endpoints. 

This condition can be equivalently stated  as follows:

\begin{corollary}
\label{corA1.2}
Let $A \subset \R$ be the union of $n$ disjoint intervals $[a_j, b_j]$, $1 \leq j \leq n$,
and let $L$ be a lattice in $\R$. Then $A$ tiles at some level $k$ by translations along
$L$, if and only if there exists a permutation $\sigma$ of $\{1,2, \ldots , n\}$ 
such that $b_{\sigma (j)} - a_j \in L$ for every $j$.
\end{corollary}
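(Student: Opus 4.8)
The plan is to apply \thmref{thmA1.1} to the one-dimensional situation and decode precisely what the vanishing of the Hadwiger functionals says, which was already sketched in the paragraph preceding the corollary. In dimension one, the only nontrivial flags are $0$-flags, and a $0$-flag $\Phi$ is nothing but a choice of a point $x \in \R$ together with a designation of which side is positive. For the polytope $A = \bigcup_j [a_j, b_j]$, a vertex contributes a coefficient $\varepsilon_0 = +1$ or $-1$ according to whether $A$ lies on the positive or negative side of that vertex; since the intervals are disjoint and closed, the left endpoints $a_j$ are exactly the vertices from which $A$ extends in one direction and the right endpoints $b_j$ those from which $A$ extends in the other. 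Consequently, for a $0$-flag $\Phi$ based at $x$, the functional $H_\Phi(A,L)$ equals $\#\{j : a_j \in L + x\} - \#\{j : b_j \in L + x\}$ (up to an overall sign depending on the orientation convention, which does not affect the vanishing condition). By \thmref{thmA1.1}, $A$ tiles at some level if and only if this difference vanishes for every $x$, i.e.\ every coset of $L$ contains equally many left and right endpoints.

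The remaining work is to show that this coset-balance condition is equivalent to the existence of a permutation $\sigma$ with $b_{\sigma(j)} - a_j \in L$ for all $j$. First I would observe that $b_{\sigma(j)} - a_j \in L$ means exactly that $b_{\sigma(j)}$ and $a_j$ lie in the same coset of $L$. So the existence of such a $\sigma$ is precisely the existence of a bijection from the multiset of left endpoints to the multiset of right endpoints that preserves $L$-cosets. The natural tool here is a counting argument organized by cosets: partition $\{1,\ldots,n\}$ (via the $a_j$) and $\{1,\ldots,n\}$ (via the $b_j$) according to which coset of $L$ each endpoint falls into.

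For the implication that the coset condition implies the permutation, I would fix a coset $c$ of $L$ and note that the balance condition, applied with $x$ a representative of $c$, gives $\#\{j : a_j \in c\} = \#\{j : b_j \in c\}$. Hence within each coset the left endpoints and right endpoints are equinumerous, so one may choose an arbitrary bijection between them coset by coset; assembling these bijections over all (finitely many relevant) cosets yields the desired permutation $\sigma$. The converse is immediate: if such a $\sigma$ exists, then $\sigma$ restricts to a bijection between $\{j : a_j \in c\}$ and $\{j : b_{\sigma(j)} \in c\}$ for each coset $c$, forcing the two counts to agree, which is exactly the balance condition, and thus by \thmref{thmA1.1} the set $A$ tiles at some level. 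I do not expect any serious obstacle here; the only point requiring mild care is the bookkeeping translating ``every translate $L+x$ contains equally many left and right endpoints'' into the coset language and confirming that the sign conventions in the definition of $\omega_\Phi$ are consistent for left versus right endpoints, so that $H_\Phi(A,L)$ genuinely computes the signed count of endpoints in $L+x$.
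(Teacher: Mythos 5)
Your proposal is correct and takes essentially the same route as the paper: apply \thmref{thmA1.1} in dimension one, identify $H_\Phi(A,L)$ as the difference between the numbers of left and right endpoints in the coset $L+x$, and then observe that this balance condition on every coset is equivalent to the existence of the permutation $\sigma$ by matching left to right endpoints coset by coset. The paper states this last equivalence without elaboration, and your coset-by-coset bijection argument fills it in exactly as intended.
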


\subsection{}
In our next example, we consider the case when $A$ is a two-dimensional convex polygon. In this
case, we will see that \thmref{thmA1.1} recovers the main result from \cite{Bol94}. The result
can be stated the following way:

\begin{corollary}[Bolle \cite{Bol94}]
\label{corA1.3}
Let  $A$ be a convex polygon  in $\R^2$, and $L$ be a lattice in $\R^2$. 
Then $A$ tiles at some level $k$ by translations with respect to $L$, if and only if 
$A$ is centrally symmetric, and for each pair of parallel edges $e$ and $e'$ of $A$
the following two conditions are satisfied:
\begin{enumerate-math}
	\item \label{corA1.3.1}
	for some $\lam \in L$, both $e + \lam$ and $e'$ lie on the same line;
	\item \label{corA1.3.2}
	if the vector $\vec{e}$ does not belongs to $L$,
	then the translation vector which carries $e$ onto $e'$, is in $L$.
\end{enumerate-math}
\end{corollary}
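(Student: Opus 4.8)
The plan is to apply \thmref{thmA1.1}, which reduces the tiling problem to verifying that $H_\Phi(A,L)=0$ for every $r$-flag $\Phi$; in dimension two only the cases $r=1$ and $r=0$ occur. First I would record what these two families of functionals measure for a convex polygon $A$. Since two distinct edges of a convex polygon never lie on a common line, a $1$-flag $\Phi$ contributes through its line $V_1$ only when $V_1$ carries an edge, and then $\omega_\Phi(A)=\varepsilon_1\,\Vol_1(F_1)$ equals $\pm$ the length of that edge, the sign recording on which side of $V_1$ the polygon lies. Likewise a $0$-flag contributes only at a vertex $p=V_0$ at which one of the two incident edges lies on the line $V_1$, and then $\omega_\Phi(A)=\varepsilon_0\varepsilon_1$, the two signs recording whether that edge leaves $p$ in the positive ray direction and whether $A$ lies on the positive side of $V_1$.

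Next I would analyze the $1$-flags. Fixing an edge direction and a choice of positive half-plane, the orbit of a $1$-flag under $L$ consists of all the $L$-translates of its line, so $H_\Phi(A,L)$ is the signed sum of the lengths of those edges of $A$ whose supporting lines lie in this single $L$-orbit. For a convex polygon there are at most two edges in a given direction, lying on opposite sides; writing $e$ and $e'$ for such a pair, their contributions are $+\Vol_1(e)$ and $-\Vol_1(e')$. Hence $H_\Phi(A,L)=0$ for all $1$-flags forces: (a) every edge has a parallel partner on the opposite side, for otherwise a lone edge gives $H_\Phi=\Vol_1(e)\neq 0$; (b) the two supporting lines lie in the same $L$-orbit, which is precisely condition~\ref{corA1.3.1}; and, granted (b), (c) $\Vol_1(e)=\Vol_1(e')$. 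Properties (a) and (c) together say exactly that the edges of $A$ pair up into parallel, equal-length, oppositely-oriented pairs, which for a convex polygon is equivalent to central symmetry. Conversely, condition~\ref{corA1.3.1} guarantees that any $L$-orbit of lines contains the supporting lines of $e$ and $e'$ together or of neither, and the equal lengths make their contributions cancel, so all $1$-flag functionals vanish.

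It remains to extract condition~\ref{corA1.3.2} from the $0$-flags, working now under the standing assumption of central symmetry with center $c$. I would fix the direction and a pair $e=[P_1,P_2]$, $e'=[P_1',P_2']$, using central symmetry to write $P_1'=2c-P_1$ and $P_2'=2c-P_2$, so that $\vec{e'}=-\vec{e}$ and the translation carrying $e$ onto $e'$ is $v=2c-(P_1+P_2)$, mapping $P_1\mapsto P_2'$ and $P_2\mapsto P_1'$. Computing the signs $\varepsilon_0\varepsilon_1$ at the four vertices for a fixed choice of positive ray and positive half-plane, the $0$-flag functional based at $P_1$ evaluates to
\begin{equation*}
H_\Phi(A,L)=1-\mathds{1}_{L}(\vec{e})+\mathds{1}_{L}(v+\vec{e})-\mathds{1}_{L}(v),
\end{equation*}
where $\mathds{1}_{L}$ denotes membership in $L$ and only those of the four vertices lying in the coset $P_1+L$ actually contribute. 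When $\vec{e}\in L$ this expression vanishes identically; when $\vec{e}\notin L$ it vanishes if and only if $v\in L$, which is exactly condition~\ref{corA1.3.2}. For the converse I would check that, granted central symmetry and conditions~\ref{corA1.3.1}--\ref{corA1.3.2}, the four vertices distribute among the $L$-cosets so that each occupied coset carries one $+$ and one $-$ contribution, whence every $0$-flag functional vanishes as well.

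The main obstacle, and the part deserving the most care, is the sign bookkeeping for the $0$-flags: one must track the two signs $\varepsilon_0,\varepsilon_1$ across all choices of positive ray and positive half-plane and all four base vertices, and verify that every resulting functional reduces to the single relation above. This is where central symmetry enters essentially, since it pins down the positions $P_i'$ in terms of $P_i$ and $c$ and thereby the coset relations $P_2-P_1=\vec{e}$, $P_2'-P_1=v$, $P_1'-P_1=v+\vec{e}$ that drive the cancellation. A secondary point to justify carefully is the standard but not entirely trivial fact that a convex polygon is centrally symmetric if and only if its edges occur in parallel pairs of equal length on opposite sides; this is what lets me pass between the edge-pairing statement produced by the $1$-flags and the central symmetry asserted in the corollary.
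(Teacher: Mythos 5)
Your proposal is correct and follows essentially the same route as the paper: reduce to the vanishing of the Hadwiger functionals via \thmref{thmA1.1}, use the $1$-flags supported on edge lines to force parallel partners of equal length (hence central symmetry, via the Minkowski-type criterion) together with condition~\ref{corA1.3.1}, and then use $0$-flags based at an endpoint of an edge to force condition~\ref{corA1.3.2}. Your treatment is somewhat more explicit than the paper's (the closed-form expression $1-\mathds{1}_{L}(\vec{e})-\mathds{1}_{L}(v)+\mathds{1}_{L}(v+\vec{e})$ makes the four-vertex sign cancellation transparent, and your coset-pairing check of sufficiency is correct), but the underlying argument is the same.
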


\begin{proof}
By \thmref{thmA1.1} it will be enough to show 
that the condition that $A$ is centrally symmetric and satisfies
 \ref{corA1.3.1} and \ref{corA1.3.2} for any 
pair of parallel edges $e$ and $e'$, is necessary and sufficient for the
 vanishing of all the Hadwiger  functionals $H_\Phi(A,L)$.
Below we will prove the necessity of the condition.
The sufficiency can be proved in a similar way.

We therefore assume that 
$H_\Phi(A,L)=0$ for all $0$- and $1$-flags $\Phi$.
Let $e$ be any edge of $A$, and let $\Phi$ be a $1$-flag determined by the line $\ell$
 containing $e$. Since $A$ is convex, there can be at most one other edge $e'$ which is 
parallel to $e$. The condition $H_\Phi(A,L)=0$ guarantees that such an edge $e'$
 indeed exists, and that $e'$ is of the same length as $e$. Since this is true for every
 edge $e$ of $A$, we conclude that $A$ is centrally symmetric. Moreover, for any edge $e$,
 the line $\ell'$ containing the parallel edge $e'$ must be of the form $\ell' = \ell + \lam$
for some $\lam \in L$, which implies condition \ref{corA1.3.1}.

To obtain condition  \ref{corA1.3.2} as well, we again let $e$ be one of the edges of $A$.
 Let $p$ be one of the endpoints of $e$, and $p'$ be the endpoint of the parallel edge $e'$
 that is given by $p'=p+\tau$, where $\tau$ is the translation vector which carries $e$ onto $e'$
 (see Figure \ref{fig:polygon}). Let $\Phi$ be a $0$-flag determined by the point $p$ and 
the line $\ell$ which contains the edge $e$. Then for the condition $H_\Phi(A,L)=0$ to hold,
the contribution of the point $p$ to the sum \eqref{eq:hadwiger} must be cancelled by
another term in the sum, which necessarily corresponds to a vertex of $A$ which lies in the 
set $L+p$, and moreover this vertex can be either the other endpoint of $e$ or the vertex $p'$. 
This shows that if the vector $\vec{e}$ is not in $L$, then $\tau$ must be in $L$, which
establishes condition  \ref{corA1.3.2} and thus concludes the proof.
\end{proof}


\begin{figure}[htb]
\centering
\begin{tikzpicture}[scale=0.85, p2/.style={line width=0.275mm, black}, p3/.style={line width=0.15mm, black!50!white}]

\draw[p2] (-1, -2) -- (2, -2);
\draw[p3] (2,-2) -- (3.5,-1) -- (3.5,0) -- (2.8, 1) -- (1, 2);
\draw[p2] (1, 2) -- (-2,2);
\draw[p3] (-2,2) -- (-3.5, 1) -- (-3.5, 0) -- (-2.8, -1) -- (-1,-2);

\draw[-stealth, p2] (-1,-2) -- (-1.975,1.9);

\draw (-0.5, 2) node[anchor=south]{$e'$};
\draw (0.5, -2) node[anchor=north]{$e$};
\draw (-1.5, 0) node[anchor=west]{$\tau$};

\fill (-1,-2) circle (0.07);
\draw (-1,-2) node[anchor=north] {\small $p$};
\fill (-2,2) circle (0.07);
\draw (-2,2) node[anchor=south east] {\small $p'$};

\end{tikzpicture}
\caption{The convex polygon $A$ in the proof of \corref{corA1.3}.}
\label{fig:polygon}
\end{figure}


\subsection{}
Actually, the convexity assumption was not essential in the proof of \corref{corA1.3}. The main 
role of this assumption was to ensure that for each edge $e$ of $A$ there can be at most one 
other edge $e'$ which is parallel to $e$. Hence, basically the same proof is valid in this 
more general situation. In this case, \thmref{thmA1.1} recovers 
the following generalization of \corref{corA1.3}, which is due to Kolountzakis \cite[Theorem 2]{Kol00}:

\begin{corollary}[Kolountzakis \cite{Kol00}]
\label{corA1.10}
Let  $A$ be a polygon in $\R^2$ with the property that for each edge $e$ of $A$, there 
is at most one other edge $e'$  which is parallel to $e$. Then $A$ tiles at some level $k$ 
by translations with respect to a lattice $L$ in $\R^2$,  if and only if  for each edge 
$e$ of $A$ the following three conditions are satisfied:
$(i)$ there exists an edge $e'$ which is parallel to $e$, and $e'$ has the same length as $e$;
$(ii)$ for some $\lam \in L$, both $e + \lam$ and $e'$ lie on the same line; and
$(iii)$ if the vector $\vec{e}$ does not belongs to $L$,
then the translation vector which carries $e$ onto $e'$, is in $L$.
\end{corollary}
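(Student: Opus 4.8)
The plan is to deduce Corollary~\ref{corA1.10} from \thmref{thmA1.1} by showing, exactly as in the proof of \corref{corA1.3}, that the hypotheses on $A$ are equivalent to the vanishing of all Hadwiger functionals $H_\Phi(A,L)$. Since we are in the plane, only $1$-flags and $0$-flags occur, and the whole argument splits accordingly. The new hypothesis, that each edge $e$ has at most one parallel edge $e'$, plays precisely the role that convexity played before: it ensures that when we examine a $1$-flag whose line is parallel to $e$, the contributions to $\omega_\Psi$ come from a controlled, essentially unique pair of edges, so that cancellation in $H_\Phi$ forces a matching rather than merely an averaged identity.

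First I would handle the $1$-flags. Fix an edge $e$ of $A$ and let $\Phi$ be the $1$-flag determined by the line $\ell$ containing $e$, with a chosen orientation and positive/negative side. Unwinding definitions \eqref{eq:weight} and \eqref{eq:hadwiger}, the value $H_\Phi(A,L)$ is a signed count of edge lengths of $A$ lying on lines of the form $\ell+\lam$, $\lam\in L$, the sign recording on which side $A$ sits. The hypothesis that $e$ has at most one parallel partner $e'$ means each such translated line carries at most two relevant edges, and opposite sides of $A$ yield opposite signs; so $H_\Phi(A,L)=0$ forces, for each $e$, the existence of a parallel edge $e'$ of equal length lying (after a lattice translation) on the same line. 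This is exactly conditions $(i)$ and $(ii)$. I would take care to verify that the ``at most one parallel edge'' hypothesis genuinely prevents spurious cancellations between three or more edges on distinct parallel lines, which is the one place the convexity proof used a geometric fact that must now be replaced by this combinatorial one.

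Next I would treat the $0$-flags to extract condition~$(iii)$, following the endpoint-cancellation argument of \corref{corA1.3} verbatim. Letting $p$ be an endpoint of $e$ and $\Phi$ a $0$-flag based at $p$ adapted to the line $\ell$, the term $\varepsilon_0\cdots\varepsilon_{d-1}\cdot 1$ contributed by $p$ must be cancelled within the sum \eqref{eq:hadwiger} by another vertex of $A$ in $L+p$ having compatible flag data; the only candidates are the other endpoint of $e$ (giving $\vec e\in L$) and the endpoint $p'=p+\tau$ of $e'$ (giving $\tau\in L$). Hence if $\vec e\notin L$ then $\tau\in L$, which is $(iii)$. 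For the converse direction I would observe, as the authors do for \corref{corA1.3}, that conditions $(i)$--$(iii)$ make every term in each $H_\Phi(A,L)$ cancel in pairs, so all functionals vanish; the sufficiency then follows from \thmref{thmA1.1}.

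The main obstacle, and the only substantive departure from the convex case, is the bookkeeping in the $1$-flag step: without convexity the set of edges parallel to a given direction can be large, and one must confirm that the single-partner hypothesis is exactly what is needed so that the signed edge-length count organizes into cancelling pairs rather than into some unconstrained linear relation among several lengths. Once this is checked, the $0$-flag analysis and the converse direction are routine adaptations of the argument already given for \corref{corA1.3}, and I would simply remark that they proceed ``in the same way,'' as the text itself suggests.
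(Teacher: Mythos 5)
Your proposal takes exactly the paper's route: the paper proves \corref{corA1.3} in detail and then obtains \corref{corA1.10} by remarking that convexity entered only through the at-most-one-parallel-partner property, and your necessity argument ($1$-flags give (i)--(ii), $0$-flags give (iii)) is a faithful rendering of that adaptation. That half is correct, and the worry you raise there is harmless: the pairing hypothesis caps each direction class at two edges, so no cancellation among three or more edges can ever arise.

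The genuine gap is in the sufficiency direction, which you (like the paper) dismiss as routine. Cancellation of the two terms contributed by a pair $(e,e')$ to a $1$-flag functional requires, besides equal lengths and lattice-translate lines, that the signs $\varepsilon_1$ be opposite, i.e.\ that $A$ adjoin the two lines from \emph{opposite} sides. Conditions (i)--(iii) say nothing about this. For a convex polygon the opposite-sides property is automatic (the body lies between the two parallel lines), so convexity plays a second role in \corref{corA1.3} that the pairing hypothesis does not replace. In fact the step fails outright: let $A$ be the simple octagon with vertices, in cyclic order, $(0,0)$, $(2,-1)$, $(3,-1)$, $(3,0)$, $(4,0)$, $(2,1)$, $(1,0)$, $(1,1)$, and let $L=\Z^2$. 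Each edge of $A$ has exactly one parallel partner; within each of the four pairs the lengths are equal and the two lines differ by a vector of $\Z^2$, and every edge vector lies in $\Z^2$, so that (iii) is vacuous; hence (i)--(iii) all hold. Yet the two horizontal edges $[(2,-1),(3,-1)]$ and $[(3,0),(4,0)]$ both have $A$ \emph{above} them, so for the $1$-flag $\Phi$ along the line $y=0$ one gets $H_\Phi(A,\Z^2)=1+1=2\neq 0$, and the ``only if'' half of \thmref{thmA1.1} then shows that $A$ does not multi-tile with respect to $\Z^2$. So the claim that (i)--(iii) force every term of every $H_\Phi(A,L)$ to cancel in pairs is not derivable; your proof (and, as literally printed, the statement itself) needs the further requirement that $A$ lies on opposite sides of $e$ and $e'$, equivalently that $e$ and $e'$ carry opposite orientations as boundary edges --- a condition your own necessity argument actually produces, since vanishing of the $1$-flag functional forces opposite signs. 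With that clause added to (i), both halves of your argument go through as described.
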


\subsection{}
\label{sec:symm}
For $k$-tiling by translates of a convex polytope in higher dimensions,
the following result was obtained in \cite[Theorem 1]{GRS12}:
\emph{If $A \subset \R^d$ is a convex polytope which $k$-tiles by
translations, then $A$ is centrally symmetric and has centrally symmetric facets.}

(Recall that a \emph{facet} of a polytope $A \subset \R^d$ is a $(d-1)$-dimensional face of $A$.)

Notice that this result is not specific for $k$-tilings by \emph{lattice} translates; the
conclusion is true regardless of whether the translation set is a lattice or not.

The proof of the result consists of two parts. In the first part, it is shown that the
 $k$-tiling assumption implies that the (classical) Hadwiger functionals $H_\Phi(A, \R^d)$
 (that are invariant with respect to all translations, not only lattice translations) vanish 
 for all $r$-flags $\Phi$ $(0 \leq r \leq d-1)$. In the special case of $k$-tiling with respect to a lattice $L$,
 this can be deduced from \thmref{thmA1.1}, since the condition that
$H_\Phi(A, L) = 0$ for every $\Phi$, implies in particular
that also $H_\Phi(A, \R^d) = 0$ for every $\Phi$.
In the general case (that is, for $k$-tiling with respect to an arbitrary
translation set), the vanishing of the Hadwiger functionals is
 proved using a volume growth argument, see 
\cite{Mur75}, \cite[Theorem 3]{LM95a}, \cite[Lemma 3.3]{GRS12}.
In fact, this part of the proof does not require the polytope $A$
 to be convex; the same argument is valid for an arbitrary polytope $A \subset \R^d$.

In the second part of the proof, the vanishing of the Hadwiger functionals
$H_\Phi(A, \R^d)$ is shown to imply that the convex polytope $A$ must
 be centrally symmetric and have centrally symmetric facets,
see \cite[Section 3.3]{Mur77}, \cite[pp.\ 640--641]{GRS12}. This is proved 
based on a classical theorem due to Minkowski, which states that
a convex polytope $A \subset \R^d$ is centrally symmetric if and only if every facet 
of $A$ has a parallel facet of the same $(d-1)$-dimensional volume.

\subsection{}
Next, we consider convex polytopes in three-dimensional space. In this case, we
will use \thmref{thmA1.1} to obtain  a condition which is necessary and sufficient for 
$k$-tiling by lattice translates,  similar in spirit to Bolle's criterion  in two dimensions.

To state the result, we will use the terminology from \cite{GKRS13} where the
notion of a \emph{four-legged-frame} was defined. The definition of a  four-legged-frame
will now be given.

Assume that $A \subset \R^3$ is a convex polytope, which is centrally symmetric and has centrally 
symmetric facets (as we have seen above, this condition is necessary for $A$ to 
$k$-tile by translations). Let $F$ be one of the facets of $A$. Then, due to the central 
symmetry assumptions, $F$ has a parallel facet $F'$ which is a translate of $F$.
Let $\tau'$ denote the translation vector which carries $F$ onto $F'$. 
Further, let $e$ be an edge of $A$ which is contained in $F$. Then  the central symmetry 
of $F$ implies that there is another edge $e''$ of $F$, which is parallel to $e$ and
has the same length. Let $\tau''$ be the translation vector which carries $e$ onto $e''$.
Finally, consider the four parallel edges of $A$ given by
\[
e, \quad e' = e + \tau', \quad e'' = e + \tau'', \quad e''' = e + \tau' + \tau'',
\]
see the illustration in Figure \ref{fig:polyhedron}. Any system $(e, e', e'', e''')$ of four
edges of  $A$ which can be obtained in this way, will be called 
(following \cite{GKRS13}) a \emph{four-legged-frame} of $A$.


\begin{figure}[htb]
\centering

\begin{tikzpicture}[scale=0.6, p2/.style={line width=0.475mm, black}, p3/.style={line width=0.2mm, black!50!white}]

\fill [fill=gray!35]  (2,3) -- (0,1) -- (4,-0.5) -- (8,0) -- (10,2) -- (6,3.5) -- (2,3);
\fill [fill=gray!35]  (1,-3) -- (-1,-5) -- (3,-6.5) -- (7,-6) -- (9,-4) -- (5,-2.5) -- (1,-3);

\draw [p3] (8,0) -- (8.5, -3) -- (7,-6);
\draw [p3] (10, 2) -- (10.5, -1) --  (9, -4);
\draw [p3] (8.5, -3) -- (10.5, -1);

\draw [p3] (2,3) -- (0,1) -- (4,-0.5) -- (8,0) -- (10,2) -- (6,3.5) -- (2,3);
\draw [p3] (1,-3) -- (-1,-5) -- (3,-6.5) -- (7,-6) -- (9,-4) -- (5,-2.5) -- (1,-3);

\draw [p2] (2,3) -- (0,1);
\draw [p2] (8,0) -- (10,2);
\draw [p2] (1,-3) -- (-1,-5);
\draw [p2] (7,-6) -- (9,-4);

\draw[-stealth, line width=0.275mm, densely dashed] (0,1) -- (7.9,0.005);
\draw[-stealth, line width=0.275mm, densely dashed] (0,1) -- (-0.985,-4.91);

\draw (0.7, 2.25) node{\small $e$};
\draw (8.65, 1.25) node{\small $e''$};
\draw (-0.05, -4) node[anchor=south]{\small $e'$};
\draw (8.6, -5.6) node[anchor=south]{\small $e'''$};
\draw (3.5, 1) node[anchor=west]{\small $\tau''$};
\draw (-0.85, -1) node{\small $\tau'$};
\draw (5.5, 2.5) node{$F$};
\draw (4.5, -3.5) node{$F'$};

\end{tikzpicture}
\caption{A four-legged-frame $(e, e', e'', e''')$ of a convex polytope in
$\R^3$ that is centrally symmetric and has centrally symmetric  facets.}
\label{fig:polyhedron}
\end{figure}


We can now state our characterization of the three-dimensional 
convex polytopes which $k$-tile by  translations along a lattice:

\begin{thm}
\label{thmA1.4}
Let  $A$ be a convex polytope in $\R^3$, and $L$ be a lattice in $\R^3$. 
Then $A$ tiles at some level $k$ by translations with respect to $L$, if and only if 
$A$ is centrally symmetric, $A$ has centrally symmetric  facets, and for
each four-legged-frame $(e, e', e'', e''')$ of $A$ 
the following three conditions are satisfied:
\begin{enumerate-math}
	\item \label{corA1.4.1}
	for some vector $\lam' \in L$, both $F + \lam'$ and $F'$ lie on the same plane;

	\item \label{corA1.4.2}
	there is $\lam \in L$ such that  $e  + \lam$ lies on the same
	line with either $e'$ or $e''$;

	\item \label{corA1.4.3}
	if no one of the three vectors $\vec{e}$, $\tau'$, $\tau''$ belongs to $L$, then all
	the four vectors of the form $\vec{e} \pm \tau' \pm \tau''$ (corresponding
	to all possible choices of signs) are in $L$.
\end{enumerate-math}
The facets $F, F'$ and the vectors $\tau', \tau''$  which appear
in conditions \ref{corA1.4.1} and \ref{corA1.4.3}
correspond to the four-legged-frame $(e, e', e'', e''')$
in the way described above.
\end{thm}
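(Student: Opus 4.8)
The plan is to invoke \thmref{thmA1.1}, which reduces the claim to proving that the stated geometric conditions on $A$ are equivalent to the vanishing of the Hadwiger functionals $H_\Phi(A,L)$ for all $r$-flags $\Phi$ with $r\in\{0,1,2\}$. I would organize the computation by the dimension $r$ of the flag, working downward from $r=2$: the two-dimensional flags govern the coarse symmetry of $A$, the one-dimensional flags govern the symmetry of the facets, and the zero-dimensional flags pin down the remaining lattice-theoretic condition on the edges. This ordering is forced by the logical dependence of the conditions, since central symmetry of $A$ must be established before the facets can be paired, and centrally symmetric facets must be established before a four-legged-frame can even be defined.

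For $r=2$ a flag $\Phi$ is just an oriented plane in $\R^3$. Since $A$ is convex, each plane meets $A$ in at most one facet, and a fixed normal direction $n$ is attained by at most two facets, one with outward normal $+n$ and one with outward normal $-n$; these enter $H_\Phi$ with opposite values of $\eps_2$. Hence $H_\Phi(A,L)=0$ for every $2$-flag forces each facet $F$ to have a parallel partner $F'$ of equal area lying on a plane that is an $L$-translate of the plane of $F$. The equal-area pairing is exactly the hypothesis of Minkowski's theorem, which then yields that $A$ is centrally symmetric, while the $L$-translate statement is precisely condition~\ref{corA1.4.1}.

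For $r=1$ I would fix the plane direction and restrict attention to the two facets $F,F'$ attained there. A flag $\Phi$ is then a line $V_1$ inside the plane $V_2$, and $\omega_\Psi(A)$ records signed edge-lengths, the constant facet-sign $\eps_2$ times the in-plane sign $\eps_1$. Using condition~\ref{corA1.4.1} to carry $F'$ onto an $L$-translate of the plane of $F$, the functional reduces to a two-dimensional Hadwiger functional for $F$ together with its central reflection $F'$ placed on the same plane. The analysis now runs parallel to the planar case treated in \corref{corA1.3}: vanishing over all such flags is equivalent to each facet being centrally symmetric, so that every edge $e$ of $F$ has a parallel partner of equal length, together with the collinearity condition~\ref{corA1.4.2}, which records the $L$-translation making $e$ collinear with its partner $e''$ in $F$ or with the corresponding edge $e'$ of $F'$.

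Finally, for $r=0$ a flag $\Phi$ is an oriented (vertex, edge-line, facet-plane) datum, and $H_\Phi$ counts with signs the vertices of $A$ in a fixed $L$-coset carrying the prescribed local incidence. The four edges $e,e',e'',e'''$ of a four-legged-frame share this combinatorial structure, and their corresponding endpoints differ from a given vertex $p$ by the four vectors $\vec e\pm\tau'\pm\tau''$; these are exactly the vertices available to cancel the contribution of $p$. Tracking the signs produced by the three nested half-space choices, the cancellation succeeds for all $0$-flags precisely when, in the generic case that none of $\vec e,\tau',\tau''$ lies in $L$, all four combinations $\vec e\pm\tau'\pm\tau''$ belong to $L$, which is condition~\ref{corA1.4.3}. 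I expect this last step to be the main obstacle: unlike the planar situation several flags contribute at each vertex, and one must check that the intricate sign pattern coming from the three orientation choices collapses to the clean four-sign condition, while the degenerate cases in which one of $\vec e,\tau',\tau''$ already lies in $L$ are absorbed correctly. The sufficiency direction in each of the three steps is obtained by reversing the same computations.
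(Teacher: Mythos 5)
Your strategy coincides with the paper's: reduce via \thmref{thmA1.1} to the vanishing of all Hadwiger functionals $H_\Phi(A,L)$, then analyze the flags dimension by dimension. Your $r=2$ and $r=1$ steps are sound and agree with the paper, which obtains central symmetry of $A$ and of its facets by citing the Minkowski-type argument (\secref{sec:symm}, following M\"urner and Gravin--Robins--Shiryaev) rather than re-deriving it, and then reads off condition \ref{corA1.4.1} from the $2$-flag through the plane of $F$ and condition \ref{corA1.4.2} from the $1$-flag through the line of $e$ inside that plane, exactly as you propose.

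The $r=0$ step --- the one you yourself single out as the main obstacle --- contains a genuine error. Write $p,q$ for the endpoints of $e$, so $q=p+\vec e$. The eight endpoints of $e,e',e'',e'''$ differ from $p$ by $0,\ \vec e,\ \tau',\ \tau'',\ \vec e+\tau',\ \vec e+\tau'',\ \tau'+\tau'',\ \vec e+\tau'+\tau''$; among the four sign patterns $\vec e\pm\tau'\pm\tau''$ only the all-plus one occurs here, so your claim that ``the vertices available to cancel the contribution of $p$'' sit at $\vec e\pm\tau'\pm\tau''$ is false, and the cancellation mechanism as you describe it cannot be run. The correct bookkeeping (the paper's) is this: for a $0$-flag whose line is parallel to $e$ and whose plane is parallel to $F$, the sign $\eps_0\eps_1\eps_2$ splits the eight endpoints into two classes of four with opposite signs,
\[
\{\,p,\; q+\tau',\; q+\tau'',\; p+\tau'+\tau''\,\}
\quad\text{and}\quad
\{\,q,\; p+\tau',\; p+\tau'',\; q+\tau'+\tau''\,\},
\]
so the vertices that can cancel $p$ differ from it by $\vec e,\ \tau',\ \tau'',\ \vec e+\tau'+\tau''$. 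Cancellation at $p$ alone would thus yield the weaker (and wrong) conclusion that one of \emph{these} four vectors lies in $L$. Condition \ref{corA1.4.3} emerges only by demanding $H_\Phi(A,L)=0$ for \emph{every} position $x$ of the flag, i.e.\ cancellation at all eight base points simultaneously: if none of $\vec e,\tau',\tau''$ is in $L$, then cancellation at $p$, $q+\tau'$, $q+\tau''$, $p+\tau'+\tau''$ forces, respectively, $\vec e+\tau'+\tau''$, $\vec e+\tau'-\tau''$, $\vec e-\tau'+\tau''$, $\vec e-\tau'-\tau''$ to lie in $L$; conversely, these four memberships produce a perfect matching between the two classes inside $L$-cosets, so the signed counts balance for every $x$. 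This two-class partition of the endpoints, and the quantification over all base points, is precisely the content missing from your sketch.
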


\begin{proof}
By \thmref{thmA1.1} it will be enough to show 
that the above conditions are necessary and sufficient for the
 vanishing of all the Hadwiger  functionals $H_\Phi(A,L)$.
Below we will prove the necessity of these conditions.
The sufficiency can be proved in a similar way.

We therefore assume that $H_\Phi(A,L)=0$ for all $0$-, $1$- and $2$-flags $\Phi$.
Then we know that $A$ must be centrally symmetric and have centrally symmetric 
facets (see \secref{sec:symm}). 
Let $(e, e', e'', e''')$ be a  four-legged-frame of $A$, and let the facets $F, F'$ 
and the vectors $\tau', \tau''$  correspond to $(e, e', e'', e''')$ as described above.

Let $\Phi$ be a $2$-flag determined by the plane $P$ that contains the facet $F$.
 From the condition $H_\Phi(A,L)=0$ we infer that the parallel plane $P'$ which contains 
the  facet $F'$ must be of the form $P' = P+\lam'$ for some $\lam' \in L$,
which yields \ref{corA1.4.1}.

Next, we let $\Phi$ be a $1$-flag determined by the line $\ell$  that contains the
edge $e$, and by the plane $P$ containing  the facet $F$. For the condition $H_\Phi(A,L)=0$ to hold,
the contribution of the edge $e$ to the sum \eqref{eq:hadwiger} must be cancelled by
another term in the sum, which necessarily corresponds to another edge of $A$ that lies in some
line of the form $\ell + \lam$, $\lam \in L$, and this edge can be either $e'$ or $e''$.
Thus we obtain \ref{corA1.4.2}.

Finally, let $p$ and $q$ denote the two endpoints of the edge $e$. 
We divide the eight endpoints of the four edges $e, e', e'', e'''$ into two subsets
with four elements each, 
\begin{equation}
\label{eq:A3.1}
p, \quad q + \tau', \quad q + \tau'', \quad p + \tau' + \tau'',
\end{equation}
and
\begin{equation}
\label{eq:A3.2}
q, \quad p + \tau', \quad p + \tau'', \quad q + \tau' + \tau''.
\end{equation}
Let $\Phi$ be a $0$-flag that is determined by some point $x \in \R^3$, by a line $\ell$
through $x$  that is parallel to $e$, and by a plane $P$ containing $\ell$  which is
parallel to $F$. Then the value of $H_\Phi(A,L)$ is equal to the difference between the 
number of vertices from the subset \eqref{eq:A3.1} that belong to $L + x$,
  and the number of vertices from subset \eqref{eq:A3.2} that are in $L + x$.
One can verify that for the condition $H_\Phi(A,L)=0$ to hold regardless
of the choice of the point $x$, it is necessary and sufficient that at least one of
the three vectors $\vec{e}$, $\tau'$, $\tau''$ belong to $L$, or that all
the four vectors of the form $\vec{e} \pm \tau' \pm \tau''$ be in $L$.
This implies \ref{corA1.4.3}, and concludes the proof.
\end{proof}

\subsection{}
As our last example, we will use \thmref{thmA1.1} to recover \cite[Theorem 2]{GRS12},
which gives a useful sufficient condition for a polytope $A \subset \R^d$
to $k$-tile by lattice translates. Remark that 
 $A$ is not assumed to be convex, or connected, in this result.

\begin{corollary}[Gravin, Robins, Shiryaev \cite{GRS12}]
\label{corA2.1}
Let $A$ be a polytope in  $\R^d$, and $L$ be a lattice in $\R^d$.
 Suppose that $A$ is centrally symmetric, 
$A$ has centrally symmetric facets, and that all the vertices of $A$ lie in $L$. 
Then the translates of $A$ with respect to $L$ form a $k$-tiling for some $k$.
\end{corollary}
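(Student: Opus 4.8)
The plan is to invoke \thmref{thmA1.1}: since $A$ is a polytope and $L$ a lattice, it suffices to show that under the three hypotheses one has $H_\Phi(A,L)=0$ for every $r$-flag $\Phi$ $(0\le r\le d-1)$; the $k$-tiling conclusion then follows immediately. Thus the whole task is to establish the vanishing of all the Hadwiger functionals.

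First I would record two consequences of the hypotheses at the level of the lattice. Let $c$ be the centre of symmetry of $A$, so that $A=2c-A$. Each vertex $v$ of $A$ is paired with the opposite vertex $2c-v$, and since both lie in $L$ we get $2c\in L$. Likewise, if $F$ is a facet of $A$ with centre $c_F$, then the vertices of $F$ are vertices of $A$, hence in $L$, and pairing opposite vertices of $F$ gives $2c_F\in L$. Combining these, for each facet $F$ the opposite facet is $\rho(F)$, where $\rho(x)=2c-x$ is the point reflection through $c$; and because $F$ is centrally symmetric one computes $\rho(F)=2c-F=F+\tau_F$ with $\tau_F:=2(c-c_F)\in L$. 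Thus the opposite facet is a genuine, \emph{direction-preserving} lattice translate of $F$. This is the step in which all three hypotheses enter simultaneously.

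Next I would set up a sign-reversing involution on the terms of $H_\Phi(A,L)$. Writing out the definitions, $H_\Phi(A,L)=\sum_{(\Psi,\mathcal F)}\varepsilon_r\cdots\varepsilon_{d-1}\,\Vol_r(F_r)$, where the sum runs over all $L$-translates $\Psi$ of $\Phi$ and all face chains $\mathcal F=(F_r\subset\cdots\subset F_{d-1}\subset A)$ with $F_j\subset (V_\Psi)_j$. Given such a pair, let $F_{d-1}$ be its top facet and $\tau=\tau_{F_{d-1}}\in L$ the associated vector, and map $(\Psi,\mathcal F)$ to $(\Psi+\tau,\mathcal F+\tau)$, where $\mathcal F+\tau=(F_r+\tau\subset\cdots\subset F_{d-1}+\tau\subset A)$. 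Since $F_{d-1}+\tau=\rho(F_{d-1})$ is the opposite facet, and each lower face $F_j$ is a face of $F_{d-1}$, the translate $F_j+\tau$ is a face of the opposite facet, hence a face of $A$; so $\mathcal F+\tau$ is again a valid chain, living in the flag $\Psi+\tau$, which is still an $L$-translate of $\Phi$ because $\tau\in L$. Replacing the new top facet by its opposite sends $\tau\mapsto-\tau$ and recovers the original pair, so the map is an involution; it has no fixed point, since $\tau=0$ would force the centre $c$ to lie on the hyperplane $\operatorname{aff}(F_{d-1})$, contradicting that near a facet the body $A$ lies on a single side of that hyperplane.

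The final step is the sign computation, which I expect to be the main obstacle. Because all of $F_r,\dots,F_{d-1}$ together with the subspaces $(V_\Psi)_r,\dots,(V_\Psi)_{d-1}$ are translated by the same vector $\tau$, the lower coefficients $\varepsilon_r,\dots,\varepsilon_{d-2}$ and the innermost volume $\Vol_r(F_r)$ are unchanged. Only the top coefficient $\varepsilon_{d-1}$ is affected: it records on which side of the hyperplane containing the top facet the body $A$ lies, and since $F_{d-1}$ and its opposite facet $F_{d-1}+\tau$ are parallel facets of the centrally symmetric $A$ bounding it from opposite sides—while the positive and negative labels of $(V_{\Psi+\tau})_{d-1}=(V_\Psi)_{d-1}+\tau$ are exactly the $\tau$-translates of those of $(V_\Psi)_{d-1}$—the body $A$ adjoins the two hyperplanes from opposite sides, so $\varepsilon_{d-1}$ flips sign. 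Hence the two paired terms are equal in magnitude and opposite in sign, and the involution cancels $H_\Phi(A,L)$ in pairs. As this holds for every $r$-flag $\Phi$, \thmref{thmA1.1} gives that $A$ tiles at some level $k$, completing the proof.
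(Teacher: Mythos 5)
Your proof is correct and takes essentially the same route as the paper: pair each facet with its image under the point reflection through the center of symmetry (a nonzero lattice translate, thanks to the vertex hypothesis), translate the whole face chain and flag by that lattice vector, and observe that only the top coefficient $\varepsilon_{d-1}$ changes sign, so the terms of $H_\Phi(A,L)$ cancel in pairs and \thmref{thmA1.1} applies. The paper phrases this cancellation as a partition of the contributing face sequences into pairs rather than as a sign-reversing involution, but the substance, including the fixed-point-free/nondegeneracy point ($\tau\neq 0$) and the sign analysis, is identical.
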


\begin{proof}
Let $t$ denote the center of symmetry of $A$, that is, $t$ is the unique 
point such that $A=-A+2t$. If $F$ is one of the facets of $A$,
 then by reflecting $F$ through the point $t$ we obtain another
facet $F' :=-F+2t$ of  $A$.  Let $t'$ denote the center of 
symmetry of the facet $F$, so that we have $F=-F+2t'$. It follows that 
$F'=F+2(t-t')$. Since all the vertices of $A$  lie in $L$, the translation 
vector $\lam := 2(t-t')$ which carries $F$ onto $F'$,
 must belong to $L$. Moreover, as $A$ and $F$ cannot have the same 
center of symmetry, we  have $t \neq t'$, and therefore $\lam \neq 0$.
Hence $F$ and $F'$ are two \emph{distinct} facets of $A$.

We thus conclude that the collection of all the facets of $A$ can be 
partitioned  into pairs of  parallel facets, such that if two facets 
$F$ and $F'$ constitute one of these pairs then $F'$ can be obtained 
from $F$ by reflection through the center of symmetry of $A$, 
and in addition there is a nonzero vector $\lam \in L$ 
such that $F' = F+\lam$. We will show that this  
implies that $H_{\Phi}(A,L) = 0$ for every 
$r$-flag $\Phi$ $(0 \leq r \leq d-1)$. Due to \thmref{thmA1.1},
this suffices in order to establish that $A$ tiles at some level $k$
by translations along $L$.

Let $0 \leq r \leq d-1$, and let 
\begin{equation}
\label{eq:A5.1}
F_r \subset F_{r+1} \subset \cdots  \subset F_{d-1}  \subset F_d = A
\end{equation}
be a sequence of faces of $A$, such that $F_j$ has dimension $j$ $(r \leq j \leq d-1)$. 
The facet $F_{d-1}$ from this sequence belongs to one
of the pairs of facets described above, so it is in the same pair with another
facet $F'_{d-1}$. Let $\lam \in L$ be the nonzero translation vector
such that $F'_{d-1} = F_{d-1} + \lam$. Then we can define another
sequence of faces
\begin{equation}
\label{eq:A5.2}
F'_r \subset F'_{r+1} \subset \cdots \subset F'_{d-1}  \subset F'_d = A
\end{equation}
given by $F'_j := F_j + \lam$ $(r \leq j \leq d-1)$. 

Suppose that  $\Psi$ is an $r$-flag determined by a sequence 
\begin{equation}
\label{eq:A5.3}
V_r \subset V_{r+1} \subset \cdots  \subset V_{d-1} \subset V_d = \R^d,
\end{equation}
such that $V_j$ is the $j$-dimensional affine subspace which contains
the face $F_j$ $(r \leq j \leq d-1)$, and let $\varepsilon_j$ be the $\pm 1$ 
coefficients associated to the sequence \eqref{eq:A5.1} with
respect to $\Psi$.  Let $\Psi'$ be the $r$-flag  obtained from $\Psi$ 
by translating all the affine subspaces in \eqref{eq:A5.3},
 as well as the positive and negative half-spaces,  by the vector $\lam$
(remark that it may happen that $\Psi'$ coincides with $\Psi$), and
 let $\varepsilon'_j$ be the  $\pm 1$  coefficients associated to the sequence
 \eqref{eq:A5.2} with respect to $\Psi'$. Then we have
\[
\eps'_j = \eps_j \quad (r \leq j < d-1) \quad \text{and} \quad \eps'_{d-1} = - \eps_{d-1},
\]
where the last equality is due to the fact that the facet
$F'_{d-1}$ can be obtained from $F_{d-1}$ by reflection through 
the center of symmetry of $A$. This implies that
\[
\varepsilon_{r} \varepsilon_{r+1} \cdots  \varepsilon_{d-1} \Vol_r (F_r) + 
\varepsilon'_{r} \varepsilon'_{r+1} \cdots \varepsilon'_{d-1} \Vol_r (F'_r)  = 0.
\]

We conclude that for any given $r$-flag $\Phi$ $(0 \leq r \leq d-1)$, the
collection of all the $r$-dimensional faces of $A$ which contribute to
the sum \eqref{eq:hadwiger} can be partitioned into pairs, such that
the joint contribution of two $r$-dimensional faces in the same pair is equal to zero.
Hence the entire sum in \eqref{eq:hadwiger} must be zero, and so we
obtain that $H_{\Phi}(A,L) = 0$ as we had to show.
\end{proof}

\begin{remark}
It can be seen from the proof above that the assertion
in \corref{corA2.1} remains true if we relax the assumption  
that the all the vertices of $A$ lie in $L$, and instead only
require that each translation vector mapping 
a facet $F$ onto the parallel facet  $F'$ obtained 
by reflecting $F$ through the center of symmetry of $A$, 
belongs to $L$.
\end{remark}


\section{Preliminaries}
\label{sec:prelim}

The rest of the paper is devoted to the proofs of our results.
The present section contains some preliminary material that will be used
in the proofs.

\subsection{Notation}
If $A$ is a subset of $\R^d$ and $\lam$ is a vector in $\R^d$,
then we denote by $A + \lam$ the translation of $A$ by the vector
$\lam$. If $A, B$ are two subsets of $\R^d$, then
$A+B$ and $A-B$ denote their set of sums and set of 
differences respectively.

We use $\dotprod{\cdot}{\cdot}$ to denote the standard scalar product in $\R^d$.

By a (full-rank) \emph{lattice} $L$ in $\R^d$ we  refer to a discrete subgroup 
generated by $d$ linearly independent vectors. We use $\det(L)$ to denote
the volume of some (or equivalently, of any) fundamental parallelepiped
of $L$. The \emph{dual lattice} $L^*$ is the set of all vectors $\lambda^* \in \R^d$ 
such that $\dotprod{\lambda}{\lambda^*}  \in \Z$, $\lambda \in L$.

For each $\xi \in \R^d$ we use the notation
$e_\xi (x) := e^{2\pi i\langle \xi,x\rangle}$,  $x \in \R^d$.

The Fourier transform of a function $f \in L^1(\R^d)$ is normalized to be
\[
\ft f (\xi)=\int_{\R^d} f (x) \, \overline{e_\xi (x)}  dx,
\]
and similarly, the Fourier transform of a finite, complex measure $\mu$ on $\R^d$ is
\[
\ft{\mu} (\xi)=\int_{\R^d} \overline{e_\xi (x)}  \,  d\mu(x).
\]

\subsection{The polytope group}
By a \emph{polytope} in $\R^d$ we mean a set 
which can be represented as the union of a finite number of 
simplices with disjoint interiors, where a \emph{simplex} is the convex 
hull of $d + 1$ points which do not all lie in some hyperplane.

It will be convenient to generalize the notion of a polytope by
considering elements of the \emph{polytope group} $\P^d$.
We define the polytope group $\P^d$ to be the abelian group
generated by the elements $[A]$ where $A$ goes through
all polytopes in $\R^d$,  subject to the relations
$[A] + [B] = [A \cup B]$ whenever $A$ and $B$ are two
polytopes with disjoint interiors.
Any element $P$ of the polytope group $\P^d$ can be
represented as a finite sum
\begin{equation}
\label{eq:P1.2}
P = \sum_{j} m_j [A_j],
\end{equation}
where $m_j$ are distinct nonzero integers, and $A_j$
are polytopes with pairwise disjoint interiors. This representation is
unique up to the order of the terms in the sum.

We think of the set of polytopes in  $\R^d$ as a subset 
of the polytope group $\P^d$, by identifying a polytope $A$
with the element $[A]$ of $\P^d$.

A function $\varphi$, defined on the set of all polytopes in 
$\mathbb R^d$, is said to be \emph{additive} if we have
$\varphi(A \cup B) = \varphi(A) + \varphi(B)$
whenever $A$ and $B$  are two polytopes with disjoint interiors.
Any such a function $\varphi$ can be extended in a unique way
to a function (also denoted by $\varphi$) on the  polytope group $\P^d$, satisfying
$\varphi(P) = \varphi(P') + \varphi(P'')$  whenever $P = P' + P''$.
The extension is given by 
$\varphi(P) = \sum m_j \varphi(A_j)$
for an element $P$ of the form \eqref{eq:P1.2}.

For example, the \emph{$d$-dimensional volume} is an additive
function on the set of polytopes in $\R^d$. Hence we can
extend the notion of the volume by additivity to all the elements of the polytope 
group $\P^d$. We will use
$\vol(P)$ to denote the volume of an element $P \in \P^d$.
Notice that $\vol(P)$ is a real number 
which may be positive, negative or zero.

We can also define, in a similar way, the \emph{indicator function} $\chi_P$
of an element $P$ of the polytope  group $\P^d$. Indeed, the
 mapping which takes a polytope $A$ to its indicator 
function $\chi_A$ is an additive mapping from the
set of polytopes into $L^1(\R^d)$. Hence this mapping extends by additivity
to the polytope group $\P^d$, and thus for each $P \in \P^d$ we 
define its indicator function $\chi_P$ as an element  of the space $L^1(\R^d)$.

For any element $P$ of the polytope group $\P^d$ we have
$\int \chi_P(x) dx = \vol(P)$.
Indeed, this equality is obviously true if $P$ is a polytope, and 
by additivity it therefore holds for all the elements of $\P^d$.

\subsection{Tiling}
We say that an element  $P$ of the polytope group $\P^d$ \emph{tiles} at level $k$
by translations with respect to a lattice $L$, if we have
\begin{equation}
\label{eq:P1.3}
\sum_{\lam \in L} \chi_P(x - \lam) = k \quad \text{a.e.}
\end{equation}
Since the indicator function $\chi_P$ is integer-valued a.e., 
the tiling level $k$ is necessarily an integer, which may be positive, negative or zero.

\begin{prop}
\label{propD1.1}
Let $P$ be an element of the polytope group $\P^d$, and let $L$ be a lattice in $\R^d$.
If $P$ tiles at level $k$ by translations with respect to $L$, then
$\vol(P) = k \det(L)$.
\end{prop}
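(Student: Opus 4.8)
The plan is to start from the tiling relation \eqref{eq:P1.3}, which states that $\sum_{\lam \in L} \chi_P(x - \lam) = k$ for almost every $x \in \R^d$, and to integrate both sides over a single fundamental domain of the lattice $L$. Let $D$ be a fundamental parallelepiped of $L$, so that $\vol(D) = \det(L)$ and the translates $D + \lam$ $(\lam \in L)$ tile $\R^d$ exactly. Integrating the constant $k$ over $D$ gives $\int_D k\, dx = k \det(L)$, which is the right-hand side of the desired identity. The task is then to show that the integral of the left-hand side over $D$ equals $\vol(P)$.

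The key step is to interchange the sum and the integral and then unfold the periodization. Concretely, I would write
\begin{equation}
\label{eq:plan-unfold}
\int_D \Big( \sum_{\lam \in L} \chi_P(x - \lam) \Big) dx
= \sum_{\lam \in L} \int_D \chi_P(x - \lam)\, dx
= \sum_{\lam \in L} \int_{D - \lam} \chi_P(y)\, dy,
\end{equation}
using the change of variables $y = x - \lam$ in each term. Since the sets $D - \lam$ $(\lam \in L)$ form a partition of $\R^d$ up to measure zero, the last sum collapses to $\int_{\R^d} \chi_P(y)\, dy$. By the identity recorded just before the proposition, namely $\int \chi_P(x)\, dx = \vol(P)$ for every element $P \in \P^d$, this integral equals $\vol(P)$. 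Combining this with the value $k \det(L)$ obtained from the right-hand side yields $\vol(P) = k \det(L)$, as claimed.

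The only point requiring a small amount of care, which I expect to be the main (though minor) obstacle, is the justification of the interchange of summation and integration in \eqref{eq:plan-unfold}. Here I would invoke absolute convergence: $\chi_P \in L^1(\R^d)$ since $P$ is an element of the polytope group, and the function $\sum_{\lam} |\chi_P(x - \lam)|$ has integral over $D$ equal to $\sum_\lam \int_{D-\lam} |\chi_P| = \int_{\R^d} |\chi_P| < \infty$. Hence Tonelli's theorem (applied to the nonnegative rearrangement) legitimizes the term-by-term integration and the subsequent reassembly. Everything else is a routine unfolding computation, so the proof is short and presents no serious difficulty.
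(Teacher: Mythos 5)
Your proof is correct and follows essentially the same route as the paper: integrate the tiling identity over a fundamental parallelepiped $D$, interchange sum and integral, change variables, and use the fact that the translates $D-\lam$ partition $\R^d$ up to measure zero together with $\int \chi_P = \vol(P)$. Your Tonelli justification of the interchange is valid, though it can be simplified: since $\chi_P$ is a finite integer combination of indicators of polytopes it has compact support, so only finitely many terms of the sum are nonzero on $D$ and the interchange is immediate.
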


\begin{proof}
Let $D$ be a fundamental parallelepiped of $L$. By \eqref{eq:P1.3} we have
\begin{equation}
\label{eq:lemD1.1.1}
\int_D \sum_{\lam \in L} \chi_P(x - \lam) dx = k \vol(D).
\end{equation}
On the other hand, the left hand side of \eqref{eq:lemD1.1.1} is equal to
\[
 \sum_{\lam \in L} \int_D \chi_P(x - \lam) dx 
= \sum_{\lam \in L} \int_{D - \lam} \chi_P(x) dx 
= \int_{\R^d} \chi_P(x) dx,
\]
where the last equality holds since $D$ tiles (at level $1$) by translations
with respect to $L$. Since $\vol(D) = \det(L)$ and $\int \chi_P(x) dx = \vol(P)$,
this proves the claim.
\end{proof}

\begin{prop}
\label{propD1.4}
Let $P$ be an element of the polytope group $\P^d$, and $L$ be a lattice in $\R^d$.
Then $P$ tiles at some level $k$ by translations with respect to $L$, 
if and only if the function $\ft{\chi}_P$ $($the Fourier transform 
of $\chi_P$$)$ vanishes on $L^* \setminus \{0\}$.
\end{prop}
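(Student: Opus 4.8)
The plan is to reduce the tiling condition to a statement about the Fourier coefficients of the $L$-periodization of $\chi_P$, and then to read off both implications from the uniqueness theorem for Fourier series on the torus $\R^d / L$. Fix a fundamental parallelepiped $D$ of $L$ and consider the periodized function
\[
g(x) = \sum_{\lam \in L} \chi_P(x - \lam).
\]
First I would check that $g$ is well defined as an element of $L^1(D)$: since
$\int_D \sum_{\lam \in L} |\chi_P(x - \lam)| \, dx = \int_{\R^d} |\chi_P(y)| \, dy < \infty$
by Tonelli's theorem, the series converges absolutely for almost every $x$, and $g$ is an $L$-periodic $L^1$ function. By definition, $P$ tiles at level $k$ with respect to $L$ precisely when $g = k$ almost everywhere.

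The heart of the argument is the computation of the Fourier coefficients of $g$ relative to the lattice. For $\lam^* \in L^*$ I would compute, interchanging sum and integral by absolute convergence and substituting $y = x - \lam$,
\[
\frac{1}{\det L} \int_D g(x) \, \overline{e_{\lam^*}(x)} \, dx
= \frac{1}{\det L} \sum_{\lam \in L} \int_{D - \lam} \chi_P(y) \, \overline{e_{\lam^*}(y)} \, dy
= \frac{1}{\det L} \, \ft{\chi}_P(\lam^*),
\]
where the identity $\overline{e_{\lam^*}(y + \lam)} = \overline{e_{\lam^*}(y)}$ is used (valid because $\dotprod{\lam^*}{\lam} \in \Z$), and where the last equality holds since the translates $D - \lam$ tile $\R^d$. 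Thus the Fourier coefficient of $g$ at the frequency $\lam^*$ equals $(\det L)^{-1} \ft{\chi}_P(\lam^*)$; in particular, the mean value of $g$ over $D$ is $(\det L)^{-1} \ft{\chi}_P(0) = \vol(P)/\det L$.

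From here both directions follow. If $P$ tiles at level $k$, then $g \equiv k$, whose Fourier coefficients at all nonzero frequencies vanish, so $\ft{\chi}_P(\lam^*) = 0$ for every $\lam^* \in L^* \setminus \{0\}$. Conversely, if $\ft{\chi}_P$ vanishes on $L^* \setminus \{0\}$, then $g$ and the constant $\vol(P)/\det L$ have identical Fourier coefficients at every frequency, hence by the uniqueness theorem for $L^1$ Fourier series on $\R^d / L$ they agree almost everywhere; so $g$ is a.e.\ equal to this constant, which is exactly the assertion that $P$ tiles, and the level is automatically an integer because $\chi_P$ is integer-valued a.e., as already noted in the text.

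I do not expect a genuine obstacle here: once the periodization is set up, the computation is routine. The only points demanding care are the two appeals to Tonelli (for the absolute convergence underlying the definition of $g$ and the interchange of sum and integral) and the invocation of the uniqueness theorem for $L^1$ functions on the torus, which is what rules out the a priori possibility that a non-constant $g$ could share all of its Fourier coefficients with a constant.
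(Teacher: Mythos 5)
Your proposal is correct and takes essentially the same approach as the paper's proof: periodize $\chi_P$ over $L$, identify the Fourier coefficients of the periodization with the values $\ft{\chi}_P(\lam^*)$ for $\lam^* \in L^*$, and conclude by the uniqueness theorem for Fourier series on $\R^d/L$. The only cosmetic difference is that the paper first reduces to $L=\Z^d$ by a linear transformation and cites this identification as a variant of Poisson's summation formula (Stein--Weiss), whereas you carry out that computation directly for a general lattice.
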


\begin{proof}
By applying a linear transformation we may suppose that $L = \Z^d$. Let
\[
f(x) := \sum_{m \in \Z^d} \chi_P(x - m),
\]
then $f$ is a $\Z^d$-periodic function whose Fourier series is given by
\[
\sum_{m \in \Z^d}  \ft{\chi}_P(m) e^{2 \pi i \dotprod{m}{x}}
\]
(this is one of the variants of Poisson's summation formula, see e.g.\ \cite[Chapter VII, Theorem 2.4]{SW71}).
Hence $f$ coincides a.e.\ with a constant function, if and only if $\ft{\chi}_P(m)=0$
for every $m \in \Z^d \setminus \{0\}$. This yields the claim.
\end{proof}

\subsection{Equidecomposability}
Let $A$ and $B$ be two polytopes in $\R^d$, and let $L$ be a lattice in $\R^d$. We say that
$A$ and $B$ are \emph{equidecomposable with respect to translations by vectors from $L$},
if there exist finite decompositions of $A$ and $B$ of the form
\[
A = \bigcup_{j=1}^{N} A_j, \quad  B = \bigcup_{j=1}^{N} B_j
\]
where $A_1, \dots, A_N$ are polytopes with pairwise disjoint interiors,
$B_1, \dots, B_N$ are also polytopes with pairwise disjoint interiors,
and $B_j$ can be obtained from $A_j$ by translation along some vector
$\lam_j$ belonging to $L$.

The following proposition connects the two notions of
equidecomposability and tiling by lattice translations.

\begin{prop}
\label{propD1.2}
Let $A$ and $B$ be two polytopes in $\R^d$, and $L$ be a lattice in $\R^d$.
Then the following two conditions are equivalent:
\begin{enumerate-math}
	\item \label{lemD1.2.1}
	$A$ and $B$ are equidecomposable with respect to translations by vectors from $L$;
	\item \label{lemD1.2.2}
	The element $[A] - [B]$ of the polytope group $\P^d$
	tiles at level zero by translations with respect to $L$.
\end{enumerate-math}
\end{prop}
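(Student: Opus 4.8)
The plan is to establish the equivalence by unwinding both conditions into a statement about the indicator function $\chi_{[A]-[B]}$ and then invoking \propref{propD1.4}. First I would observe that by the additivity of the indicator-function map on the polytope group, we have $\chi_{[A]-[B]} = \chi_A - \chi_B$ as an element of $L^1(\R^d)$. The tiling condition \ref{lemD1.2.2} then reads
\begin{equation}
\label{eq:plan1}
\sum_{\lam \in L} \bigl( \chi_A(x-\lam) - \chi_B(x-\lam) \bigr) = 0 \quad \text{a.e.},
\end{equation}
that is, $\sum_{\lam} \chi_A(x-\lam) = \sum_{\lam} \chi_B(x-\lam)$ almost everywhere. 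So the content of the proposition is that an equidecomposition of $A$ and $B$ by $L$-translates exists precisely when the translated copies of $A$ and of $B$ cover every point of $\R^d$ with the same multiplicity.

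For the direction \ref{lemD1.2.1} $\Rightarrow$ \ref{lemD1.2.2}, I would take a decomposition $A = \bigcup A_j$, $B = \bigcup B_j$ with $B_j = A_j + \lam_j$, $\lam_j \in L$. Since translation by a lattice vector commutes with the periodization $\sum_{\lam \in L}(\cdot)(x-\lam)$, each piece satisfies $\sum_{\lam} \chi_{B_j}(x-\lam) = \sum_{\lam} \chi_{A_j}(x-\lam)$ a.e. Summing over $j$ and using that the $A_j$ have disjoint interiors (so $\chi_A = \sum_j \chi_{A_j}$ a.e., and likewise for $B$) yields \eqref{eq:plan1}, which is exactly condition \ref{lemD1.2.2}.

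The harder direction is \ref{lemD1.2.2} $\Rightarrow$ \ref{lemD1.2.1}, where one must manufacture an actual common refinement realizing the equidecomposition. Here I would pass to a fundamental domain: fix a fundamental parallelepiped $D$ of $L$ and use the fact that, up to measure zero, $\R^d$ is the disjoint union of the translates $D + \lam$. Now triangulate both $A$ and $B$ finely enough, and then intersect all the translated pieces with the tiles $D+\lam$; this produces finite families of small polytopes whose $L$-translates reduce everything to the single cell $D$. The tiling identity \eqref{eq:plan1} says that, as measures on $D$ (after folding by $L$-periodicity), the total contribution coming from $A$-pieces equals that coming from $B$-pieces at almost every point. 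The main obstacle will be the combinatorial bookkeeping: turning this equality of folded multiplicities into an explicit pairing of congruent polytopal pieces $A_j \leftrightarrow B_j$ with $B_j = A_j + \lam_j$. I expect one can arrange this by taking a common refinement of the two induced cell decompositions of $D$ and matching cells according to which translate of $A$ (respectively $B$) they came from, the equality of multiplicities guaranteeing that such a matching can be completed. This is the one genuinely nontrivial step, and I would present it as the core of the proof, with the forward direction and the reduction via \propref{propD1.4} as routine.
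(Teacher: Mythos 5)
Your proposal is correct, but for the hard direction \ref{lemD1.2.2}~$\Rightarrow$~\ref{lemD1.2.1} you take a genuinely different route from the paper. The paper argues by induction on the cardinality of the finite set $S(A,B)=\{\lambda\in L:\vol(A\cap(B-\lambda))>0\}$: if $S(A,B)=\emptyset$ then \ref{lemD1.2.2} forces $A$ and $B$ to be empty; otherwise it picks $\lambda\in S(A,B)$, peels off the congruent pieces $A'=A\cap(B-\lambda)$ and $B'=(A+\lambda)\cap B=A'+\lambda$, notes that the remainders $A''$, $B''$ again satisfy \ref{lemD1.2.2} while $S(A'',B'')\subseteq S(A,B)\setminus\{\lambda\}$, and applies the inductive hypothesis; no refinement is ever constructed, and termination comes for free from the finiteness of $S(A,B)$. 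Your plan instead folds both polytopes into a fundamental parallelepiped $D$ and matches pieces of a common refinement, and this is completable: writing $A_i=(A-\lambda_i)\cap D$ and $B_j=(B-\mu_j)\cap D$ for the finitely many relevant lattice vectors, condition \ref{lemD1.2.2} becomes $\sum_i\chi_{A_i}=\sum_j\chi_{B_j}$ a.e.\ on $D$; taking the arrangement of all hyperplanes supporting facets of triangulations of the $A_i$ and $B_j$, every indicator is constant on each open full-dimensional cell $C$, so $\#\{i:C\subset A_i\}=\#\{j:C\subset B_j\}$, and an arbitrary bijection chosen independently on each cell pairs the polytope $\overline{C}+\lambda_i\subset A$ with $\overline{C}+\mu_{j}\subset B$, which differ by the lattice vector $\mu_{j}-\lambda_i$. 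So the ``combinatorial bookkeeping'' you flag as the nontrivial step is not an obstruction (there is no Hall-type matching problem, since the counts agree cell by cell), but your route does pay for the arrangement machinery --- cells are convex polytopes, each $A_i$ is exactly the union of the closed cells it contains --- which is precisely what the paper's peeling-off induction avoids. Two minor remarks: your forward direction is the same as the paper's, and \propref{propD1.4} is not actually needed anywhere in your argument; tiling at level zero enters only through its definition.
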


\begin{proof}
We start by showing that \ref{lemD1.2.1} implies \ref{lemD1.2.2}. The condition
\ref{lemD1.2.1} means that we have decompositions
$[A]  = [A_1] + \dots + [A_n]$ and
$[B] = [B_1] +\dots+ [B_n]$, where $A_j, B_j$ are polytopes such
that $B_j$ is obtained from $A_j$ by  translation along some vector 
$\lam_j \in L$. Hence, for each $j$, the element $[A_j] - [B_j]$ is easily seen
to tile at level zero by translations with respect to $L$.
Since $[A]-[B] = \sum_{j=1}^{n} ([A_j] - [B_j])$, condition
\ref{lemD1.2.2} follows.

Next, we prove that \ref{lemD1.2.2} implies \ref{lemD1.2.1}. The proof
is done by induction on the number of elements of the set
\[
S(A,B) := \{ \lam \in L : \vol(A \cap (B - \lam)) > 0 \}.
\]
If the set $S(A,B)$ is empty, then it follows from \ref{lemD1.2.2} that 
both $A$ and $B$ must be empty sets, so in this case the condition
\ref{lemD1.2.1} holds trivially. Otherwise, the set $S(A,B)$ is nonempty,
hence  we may choose some vector $\lam \in S(A,B)$. Then the two sets
\[
A' := A \cap (B - \lam), \quad B' := (A + \lam) \cap B
\]
are polytopes satisfying $B' = A' + \lam$. Since $A' \subset A$ we
may decompose $A$ as the union of $A'$ and another (possibly empty)
polytope $A''$,  where $A'$, $A''$ have disjoint interiors. Similarly, $B$ 
is the union of $B'$ and some polytope $B''$  (which, again, may be empty)
such that the interiors of $B'$ and $B''$ are disjoint. It follows from \ref{lemD1.2.2}
that the element $[A''] - [B'']$ of the polytope group $\P^d$
tiles at level zero by translations with respect to $L$. Moreover, the set
$S(A'', B'')$ is a subset of $S(A, B) \setminus \{\lam\}$, and therefore
it has less elements than $S(A, B)$. So the inductive hypothesis implies that 
$A''$ and $B''$ are equidecomposable using translations by vectors from $L$.
Hence the same is true also for $A$ and $B$, and thus \ref{lemD1.2.1} is established.
\end{proof}

As a consequence of \propref{propD1.2} we  obtain \propref{propA2.2} stated above:

\begin{proof}[Proof of \propref{propA2.2}]
Let $A$ and $B$ be two polytopes in $\R^d$, let $L$ be a lattice in $\R^d$,
and assume that $B$ tiles at some level $k$ by translations with respect to $L$. 
Then the translates of $A$ along $L$ is also a $k$-tiling, if and only if the element
$[A]-[B]$ of the polytope group $\P^d$ tiles at level zero by translations along $L$.
By \propref{propD1.2}, the latter condition is equivalent to $A$ and $B$
  being equidecomposable with respect to translations by vectors from $L$.
\end{proof}

We also use \propref{propD1.2} to establish the following claim:

\begin{prop}
\label{propE1.2}
Let $P$ be an element of the polytope group $\P^d$, and let $L$ be a lattice in $\R^d$.
Then $P$ tiles at level zero by translations with respect to $L$, if and only if
$P$ can be represented as a finite sum $P = \sum ([A_j] - [A'_j])$ where
$A_j$, $A'_j$ are polytopes such that $A'_j$ is obtained from $A_j$ by
translation along a vector $\lam_j \in L$.
\end{prop}

\begin{proof}
The ``if'' part of the claim is obvious, so we will just prove the ``only if''
part. Let $\mathscr{C}^d(L)$ denote the subgroup of the polytope group $\P^d$
generated by all the elements of the form $[A]-[A']$, such that $A,A'$ are polytopes
and $A'$ is obtained from $A$ by translation along a vector from $L$.
It is enough to show that if $P$ tiles at level zero by translations 
with respect to $L$, then $P$ belongs to the subgroup $\mathscr{C}^d(L)$.

The polytope group $\P^d$ is generated by the elements $[A]$, where $A$ goes through 
all the polytopes. We can therefore represent $P$ in the form
\[
P = [A_1] + \dots + [A_n] - [B_1] - \dots - [B_m],
\]
where $A_j$, $B_k$ are polytopes. For each $j$ we let $A'_j$ be a polytope
obtained from $A_j$ by translation along some vector in $L$, where the translation
vectors are chosen such  that the  polytopes $A'_1, \dots, A'_n$ have pairwise
disjoint interiors. Hence $[A'_1] + \dots + [A'_n] = [A']$ for a certain polytope $A'$. 
In the same way, for each $k$ we let $B'_k$ be a polytope obtained from $B_k$ by 
translation along some vector in $L$, such  that $[B'_1] +\dots+ [B'_m] = [B']$
for some polytope $B'$. Let $P' :=  [A'] - [B']$, then
\[
P - P' = \sum_{j=1}^{n} ([A_j] - [A'_j]) - \sum_{k=1}^{m} ([B_k] - [B'_k]),
\]
which shows that $P - P'$ belongs to the subgroup  $\mathscr{C}^d(L)$. In particular,
this implies that $P - P'$ tiles at level zero by translations with respect to $L$. Hence
also $P' = P - (P - P')$ tiles at level 
zero by translations with respect to $L$. By \propref{propD1.2},  the polytopes
$A'$ and $B'$ are therefore equidecomposable with respect to translations by vectors from $L$.
This implies that $P'$ belongs to the subgroup  $\mathscr{C}^d(L)$. We obtain that
both $P'$ and $P-P'$ are in $\mathscr{C}^d(L)$, and therefore $P$ is in $\mathscr{C}^d(L)$
as well, as we had to show.
\end{proof}

\subsection{Flags}
If $r$ is an integer, $0 \leq r \leq d-1$, then
an \emph{$r$-flag} $\Phi$ in $\R^d$ 
is defined to be a sequence of affine subspaces 
\begin{equation}
\label{eq:C3.1.1}
V_r \subset V_{r+1} \subset \cdots  \subset V_{d-1} \subset V_d = \R^d
\end{equation}
such that $V_j$ has dimension $j$. Each subspace $V_j$ ($r \leq j \leq d-1$)
 in the sequence divides the next one $V_{j+1}$ into two half-spaces;
we assume that $\Phi$ is endowed with a choice of one of these
half-spaces being called positive, and the other being called negative.

It will be convenient to define also a \emph{$d$-flag} in $\R^d$ to be the sequence 
which consists of just one subspace $V_d = \R^d$. 

Let $A$ be a polytope in $\R^d$, and suppose that we have a sequence
\[
F_r \subset F_{r+1} \subset \cdots \subset F_{d-1} \subset F_d = A,
\]
where $F_j$ is a $j$-dimensional face of $A$ $(r \leq j \leq d-1)$. Such a sequence will be called 
an \emph{$r$-sequence} of faces of the polytope $A$, and will be denoted
by $\F_r$.

 We will say that the $r$-sequence $\F_r$ is contained in the
$r$-flag $\Phi$ if the face $F_j$ is contained in the subspace $V_j$ for each 
$r \leq j \leq d-1$.

Each $r$-flag $\Phi$ $(0 \leq r \leq d)$
determines an additive ``weight'' function $\omega_{\Phi}$,
which is defined by \eqref{eq:weight} on the set of polytopes, and is extended
by additivity to the polytope group $\P^d$. Notice that if $\Phi$ is a $d$-flag, then
$\omega_{\Phi}(P)=\vol(P)$ for every $P \in \P^d$.

The \emph{Hadwiger functional} $H_\Phi(\,\cdot\, , L)$ associated to an
$r$-flag $\Phi$ $(0 \leq r \leq d)$ and to a lattice $L$ in $\R^d$, is
the additive function on the polytope group $\P^d$ which is given
by \eqref{eq:hadwiger} on the set of polytopes. The Hadwiger functional
is invariant with respect to translations by vectors from $L$, that is,
$H_\Phi(P, L) = H_\Phi(Q, L)$ if $Q$ is obtained from $P$ by
translation along a vector $\lam \in L$.

\subsection{Flag measures}
Let $\Phi$ be an $r$-flag   in $\R^d$ $(0 \leq r \leq d)$,
determined by a sequence of affine subspaces  \eqref{eq:C3.1.1}.
For each polytope $A$ we define a signed measure $\nu_{A,\Phi}$ 
on $\R^d$ given by
\begin{equation}
\label{eq:C3.1.2}
\nu_{A,\Phi} = \sum_{\F_r} \varepsilon_{r} \varepsilon_{r+1} \cdots \varepsilon_{d-1} \, {\Vol_r}|_{F_r} ,
\end{equation}
where $\F_r$  goes through all $r$-sequences of faces
$F_r \subset F_{r+1} \subset \cdots \subset F_d$
of the polytope $A$ that are contained in $\Phi$,
 the $\varepsilon_j$ are the $\pm 1$ coefficients 
associated to the $r$-sequence $\F_r$ with
respect to $\Phi$ in the same way as in \eqref{eq:weight},
 and where ${\Vol_r}|_{F_r}$ denotes the $r$-dimensional volume measure
restricted to the face $F_r$.

In the case when $r=0$, by a $0$-dimensional face of $A$ we mean a vertex of $A$, and 
the measure ${\Vol_r}|_{F_r}$ is  understood to be the Dirac measure at the vertex $F_r$.

The mapping which takes a polytope $A$ to the measure
$\nu_{A,\Phi}$ can be seen to be an additive mapping, that is,
$\nu_{A \cup B,\Phi} = \nu_{A,\Phi} + \nu_{B,\Phi}$
whenever  $A$ and $B$  are two polytopes with disjoint interiors. 
Hence we can extend this mapping by additivity, and define
the signed measure $\nu_{P,\Phi}$ for any
element $P$  of the polytope group $\P^d$.

Remark that if $\Phi$ is a $d$-flag, then $\nu_{P,\Phi}$ is
the measure $\chi_P(x) dx$, that is, an
absolutely continuous measure with density $\chi_P$
with respect to the Lebesgue measure $dx$.

It follows from \eqref{eq:weight} and \eqref{eq:C3.1.2} that the
measure $\nu_{P,\Phi}$ satisfies
$\int d\nu_{P,\Phi} = \omega_{\Phi}(P)$,
for any $r$-flag $\Phi$   $(0 \leq r \leq d)$ and any 
element $P$  of the polytope group $\P^d$.

If $\Phi$ is an $r$-flag in $\R^d$ $(0 \leq r \leq d)$,
$L$ is a lattice in $\R^d$, and $P$ is 
an element  of the polytope group $\P^d$,
 then we define the signed measure
\begin{equation}
\label{eq:C3.1.4}
\mu_{P,\Phi,L} = \sum_{\Psi} \nu_{P,\Psi}, 
\end{equation}
where $\Psi$ runs through all $r$-flags for which there exists $\lam \in L$ such that $\Psi$
can be obtained from $\Phi$ by translating all the 
affine subspaces in \eqref{eq:C3.1.1}, as well as the positive and negative half-spaces, 
 by the vector $\lam$. As a function of $P$, the measure $\mu_{P,\Phi,L}$ is additive 
and invariant under translations by vectors from $L$. It  satisfies
\begin{equation}
\label{eq:C3.1.5}
\int d \mu_{P,\Phi,L}  = H_{\Phi}(P,L).
\end{equation}


\section{Proof of main result}

\subsection{}
In this section we prove the following theorem:

\begin{thm}
\label{thmC3.1}
Let $P$ be an element of the polytope group $\P^d$, and let $L$ be a lattice in $\R^d$.
Then $P$ tiles at some level $k$ by translations with respect to $L$, if and only if
$H_{\Phi}(P,L) = 0$ for every $r$-flag $\Phi$ $(0 \leq r \leq d-1)$.
\end{thm}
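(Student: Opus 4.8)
The plan is to prove both directions using the Fourier-analytic characterization of tiling from \propref{propD1.4}, together with the flag measures $\mu_{P,\Phi,L}$ introduced in the Preliminaries. By \propref{propD1.4}, the statement ``$P$ tiles at some level $k$'' is equivalent to the vanishing of $\ft{\chi}_P$ on $L^*\setminus\{0\}$. The essential idea is to relate the Fourier transform $\ft{\chi}_P(\xi)$, for $\xi$ in the dual lattice, to the collection of all the Hadwiger functionals $H_\Phi(P,L)$. First I would establish an integration-by-parts (Stokes-type) identity expressing $\ft{\chi}_P$ in terms of the lower-dimensional flag measures $\nu_{P,\Psi}$: differentiating the indicator and integrating by parts along a maximal flag of affine subspaces reduces the volume integral successively to integrals over faces of decreasing dimension. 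This produces a formula in which $\ft{\chi}_P(\xi)$ is written as a finite sum over flags, with each term a Fourier-type integral of a flag measure $\nu_{P,\Psi}$ weighted by factors depending on $\xi$.

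The ``only if'' direction should then follow by showing that if $H_\Phi(P,L)=0$ for all flags $\Phi$, then each contribution to $\ft{\chi}_P(\xi^*)$ at a nonzero dual lattice point $\xi^*\in L^*$ must vanish. The key observation here is that for $\xi^*\in L^*$, the translation-invariance of $e_{\xi^*}$ modulo $L$ means that summing a single flag measure $\nu_{P,\Psi}$ over the $L$-orbit of flags — which by \eqref{eq:C3.1.4} is exactly how $\mu_{P,\Phi,L}$ is built — produces integrals governed by $\int d\mu_{P,\Phi,L} = H_\Phi(P,L)$ via \eqref{eq:C3.1.5}. Thus the hypothesis $H_\Phi(P,L)=0$ forces the vanishing of $\ft{\chi}_P$ on $L^*\setminus\{0\}$, and \propref{propD1.4} yields tiling. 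One must be careful at $\xi^*=0$, where $\ft{\chi}_P(0)=\vol(P)$ need not vanish; this corresponds to the freedom in the tiling level $k$ and should be handled by the fact that \propref{propD1.4} only requires vanishing off the origin.

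For the ``if'' direction (tiling $\Rightarrow$ all $H_\Phi=0$), I would exploit \propref{propE1.2}: if $P$ tiles at level zero it is a sum of differences $[A_j]-[A'_j]$ with $A'_j$ an $L$-translate of $A_j$, and since each $H_\Phi(\,\cdot\,,L)$ is an $L$-translation-invariant additive functional, every such difference contributes zero. To reduce the general case (tiling at arbitrary level $k$) to level zero, I would subtract a standard reference element — a union of $k$ fundamental parallelepipeds of $L$ — which tiles at level $k$ and for which all Hadwiger functionals vanish; the difference then tiles at level zero, and the invariance argument applies. The main obstacle I anticipate is the Stokes-type identity in the first step: one must track the $\pm 1$ orientation coefficients $\varepsilon_j$ carefully through each integration by parts so that they match exactly the signs appearing in \eqref{eq:weight} and \eqref{eq:C3.1.2}, and one must control the boundary terms arising at each stage so that a clean recursive formula relating $\ft\chi_P$ to the flag measures emerges. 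Getting the bookkeeping of half-spaces and signs to align with the combinatorial definition of $\omega_\Phi$ is the delicate heart of the proof.
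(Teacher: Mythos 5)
Your plan is essentially the paper's own proof: the tiling-implies-vanishing direction is done exactly as you propose (\propref{propE1.2} together with subtracting $k$ fundamental parallelepipeds of $L$, for which all Hadwiger functionals vanish), and the converse is done by combining \propref{propD1.4} with your Stokes-type descent through flags, which the paper organizes as an induction on the flag dimension (\lemref{lemC1.1} and \lemref{lemC1.2}), the divergence theorem supplying the step from $r$-flags to $(r-1)$-flags. The one detail your sketch blurs is that the descent must branch on $\xi$: one can integrate by parts at level $r$ only if some $v \in V_r$ has $\dotprod{\xi}{v} \neq 0$, whereas the orbit integral $\int \overline{e_\xi}\, d\mu_{P,\Phi,L}$ reduces to $H_{\Phi}(P,L)$ exactly when $\xi \in V_r^{\perp}$, so there is no single $\xi$-independent closed formula for $\ft{\chi}_P(\xi)$ — this dichotomy is precisely what the paper's inductive formulation resolves.
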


This is, in fact, the main result of this paper. It generalizes the
statement of Theorem \ref{thmA1.1}, from the set of  polytopes to all the
elements of the polytope group $\P^d$. In particular, 
\thmref{thmA1.1} follows from \thmref{thmC3.1} in the
special case when $P$ is a polytope.

We also deduce \thmref{thmA2.1} from \thmref{thmC3.1}. This is done as follows:

\begin{proof}[Proof of \thmref{thmA2.1}]
Let $A$ and $B$ be two polytopes in $\R^d$, and let $L$ be a lattice in $\R^d$. 
If $A$ and $B$ are equidecomposable with respect to translations by vectors 
from $L$, then it is obvious that $A$ and $B$ have the same
volume, and
\begin{equation}
\label{eq:C8.1.1}
H_{\Phi}(A,L) = H_{\Phi}(B,L) 
\end{equation}
for every $r$-flag $\Phi$ $(0 \leq r \leq d-1)$. This is so because the volume
function, as well as all the Hadwiger functionals  $H_\Phi(\,\cdot\, , L)$,
are additive functions on the set of polytopes  that are invariant
with respect to translations by vectors from $L$. 

Conversely,  suppose that $A$ and $B$ have the same volume, and that
\eqref{eq:C8.1.1} holds for every $r$-flag $\Phi$ $(0 \leq r \leq d-1)$. 
Then by additivity, the element $P := [A] - [B]$ of the polytope group 
$\P^d$ has volume zero, and it satisfies 
$H_{\Phi}(P,L) = 0$ for every $r$-flag $\Phi$ $(0 \leq r \leq d-1)$.
 \thmref{thmC3.1} therefore yields that $P$ tiles at some level $k$ by 
translations with respect to $L$. By \propref{propD1.1} we have $\vol(P) = k \det(L)$, 
hence the tiling level $k$ must be zero. We can therefore invoke \propref{propD1.2},
which yields that $A$ and $B$ are equidecomposable with respect to translations by vectors from $L$.
\end{proof}

\subsection{}
The remainder of this section is devoted to the proof
of \thmref{thmC3.1}. The next lemma is the key one in the proof:

\begin{lem}
\label{lemC1.1}
Let $P$ be an element  of the polytope group $\P^d$,
let $L$ be a lattice in $\R^d$,
and let $r$ be an integer, $0 \leq r \leq d-1$. Assume that
$H_{\Psi}(P,L) = 0$ for every $s$-flag $\Psi$
 $(0 \leq s \leq r)$. Then for every $r$-flag $\Phi$ we have
\begin{equation}
\label{eq:C1.1}
\ft{\mu}_{P,\Phi,L}(\xi) = 0
\end{equation}
for all $\xi \in L^*$.
\end{lem}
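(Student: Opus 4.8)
The plan is to prove the identity by induction on $r$, treating the Fourier transform at a point $\xi \in L^*$ directly. Fix $P$, $L$, and an $r$-flag $\Phi$ with affine subspaces $V_r \subset \cdots \subset V_d$; write $W_r$ for the $r$-dimensional linear subspace parallel to the base subspace $V_r$, and let $\xi'$ denote the orthogonal projection of $\xi$ onto $W_r$. The argument splits according to whether $\xi' = 0$ or $\xi' \neq 0$, and only the second case invokes the inductive hypothesis.

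Suppose first that $\xi' = 0$; this covers the base case $r = 0$ entirely, since then $W_r = \{0\}$. When $\xi \perp W_r$, the function $e^{-2\pi i \dotprod{\xi}{x}}$ is constant along $V_r$, equal to $e^{-2\pi i \dotprod{\xi}{x_0}}$ for any fixed $x_0 \in V_r$, hence constant on every $r$-face $F_r$ appearing in $\nu_{P,\Phi}$. Therefore $\ft{\nu}_{P,\Phi}(\xi) = e^{-2\pi i \dotprod{\xi}{x_0}} \int d\nu_{P,\Phi} = e^{-2\pi i \dotprod{\xi}{x_0}}\, \omega_\Phi(P)$. Every flag $\Psi$ in the $L$-orbit of $\Phi$ has base subspace $V_r + \lam$ for some $\lam \in L$, and since $\dotprod{\xi}{\lam} \in \Z$ the phase factor is the same for all $\Psi$. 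Summing over the orbit and using $\int d\mu_{P,\Phi,L} = H_\Phi(P,L)$ gives $\ft{\mu}_{P,\Phi,L}(\xi) = e^{-2\pi i \dotprod{\xi}{x_0}} H_\Phi(P,L)$, which vanishes by the hypothesis applied to the $r$-flag $\Phi$ itself.

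Now suppose $\xi' \neq 0$, so $r \geq 1$. Here I would integrate by parts inside $V_r$. The $W_r$-valued vector field $U(x) = \frac{-1}{2\pi i\, |\xi'|^{2}}\, \xi'\, e^{-2\pi i \dotprod{\xi}{x}}$ satisfies $\div_{V_r} U = e^{-2\pi i \dotprod{\xi}{x}}$ on $V_r$, so the divergence theorem converts each $\int_{F_r} e^{-2\pi i \dotprod{\xi}{x}}\, d\Vol_r$ into the boundary sum $\frac{-1}{2\pi i\, |\xi'|^{2}} \sum_{F_{r-1}} \dotprod{\xi'}{n}\, \int_{F_{r-1}} e^{-2\pi i \dotprod{\xi}{x}}\, d\Vol_{r-1}$, taken over the facets $F_{r-1}$ of $F_r$ with $n$ the outward unit normal in $V_r$. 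Each facet $F_{r-1}$, prepended to the tail $F_r \subset \cdots \subset F_d$, forms an $(r-1)$-sequence, and $V_{r-1} := \mathrm{aff}(F_{r-1})$ extends $\Phi$ to an $(r-1)$-flag $\Phi'$. I would then regroup the boundary terms by the $(r-1)$-flag they belong to and rewrite $\ft{\mu}_{P,\Phi,L}(\xi)$ as a finite linear combination $\frac{-1}{2\pi i\, |\xi'|^{2}} \sum_{\Phi'} c_{\Phi'}\, \ft{\mu}_{P,\Phi',L}(\xi)$ over $(r-1)$-flags $\Phi'$. Because the assumptions of the lemma at level $r$ contain those at level $r-1$, the inductive hypothesis yields $\ft{\mu}_{P,\Phi',L}(\xi) = 0$ for every such $\Phi'$, and the desired identity follows.

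The main obstacle is this last regrouping. I must match the outward normal $n$ of the divergence theorem with the $\pm 1$ coefficient $\eps_{r-1}$ that the flag formalism attaches to $V_{r-1}$: for a fixed direction $V_{r-1}$ the normal is determined up to a sign governed by which side of $V_{r-1}$ the face $F_r$ lies on, so that $\dotprod{\xi'}{n} = \pm \eps_{r-1}\, \dotprod{\xi'}{\hat{n}_0}$ for a fixed unit normal $\hat{n}_0$, letting the factor $\eps_{r-1}$ merge with the remaining signs $\eps_r \cdots \eps_{d-1}$ to produce exactly the sign product defining $\nu_{P,\Phi'}$. Since both this sign and the constant $\dotprod{\xi'}{\hat{n}_0}$ are invariant under the lattice translations, the constants $c_{\Phi'}$ can be pulled out orbit-by-orbit, which is what legitimizes passing from $\sum_\Psi \ft{\nu}_{P,\Psi}$ to $\sum_{\Phi'} c_{\Phi'}\, \ft{\mu}_{P,\Phi',L}$; verifying that every $(r-1)$-flag in a given orbit is accounted for exactly once, with the correct constant, is the crux of the argument.
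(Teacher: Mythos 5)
Your proposal is correct and follows essentially the same route as the paper's proof: induction on $r$, with the case $\xi\perp V_r$ (which covers $r=0$) handled directly via $\int d\mu_{P,\Phi,L}=H_\Phi(P,L)=0$, and the case $\xi\not\perp V_r$ handled by the divergence theorem on the faces $F_r$, regrouping the boundary terms into $(r-1)$-flags and invoking the inductive hypothesis. The ``crux'' you identify---matching the outward normal with the sign $\eps_{r-1}$ and collecting the $(r-1)$-faces orbit-by-orbit with lattice-invariant constants---is exactly what the paper formalizes via the equivalence classes $\E^l$, the flags $\Phi^l$, and the normals $\sigma^l$, and your sketch resolves it in the same way (your choice $v=\xi'$ is just a specialization of the paper's free vector $v\in V_r$).
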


\begin{proof}
Let $\Phi$ be an $r$-flag determined by a sequence of affine subspaces  
\begin{equation}
\label{eq:D1.1}
V_r \subset V_{r+1} \subset \cdots  \subset V_{d-1} \subset V_d = \R^d,
\end{equation}
such that $V_j$ has dimension $j$. 
We must show that \eqref{eq:C1.1} holds for  all $\xi \in L^*$.
We prove this by induction on $r$.

By applying a translation we may assume that all the
affine subspaces in \eqref{eq:D1.1} are in fact linear subspaces.
We will first show that \eqref{eq:C1.1} is true for every $\xi \in L^* \cap V_r^\perp$, where 
$V_r^\perp$ denotes the orthogonal complement of the linear subspace $V_r$.
Indeed, for such $\xi$ we have
$e_\xi(x) = 1$ for all $x$ belonging to the support of 
 the measure $\mu_{P,\Phi,L}$, since the support of $\mu_{P,\Phi,L}$ is contained in
the set $V_r + L$ and we have $\dotprod{x}{\xi} \in \Z$ whenever $x$ is a point in $V_r + L$.
This implies that
\begin{equation}
\ft{\mu}_{P,\Phi,L}(\xi) = \int \overline{e_\xi} \, d \mu_{P,\Phi,L} = \int d \mu_{P,\Phi,L}  = H_{\Phi}(P,L).
\end{equation}
Hence  \eqref{eq:C1.1} follows using the assumption
 that $H_{\Phi}(P,L) = 0$.

Notice that if $r=0$, then the subspace $V_r^\perp$ coincides
with the whole space $\R^d$. So in this case,
the full assertion of \lemref{lemC1.1} is already established.
Hence, in what follows we will assume  that $r \geq 1$. In this case,
it still remains for us to show that \eqref{eq:C1.1} is true also for every 
$\xi \in L^* \setminus V_r^\perp$.

The polytope group $\P^d$ is generated by the elements $[A]$, where $A$ goes through 
all the polytopes in $\R^d$. We can therefore represent $P$ as a finite sum
$P=\sum m_k [A_k]$, where $m_k$ are integers and $A_k$ are polytopes.
By additivity we have 
\begin{equation}
\label{sum-muj}
 \mu_{P,\Phi,L} = \sum_k m_k \, \mu_{A_k,\Phi,L}.
\end{equation}
The Fourier transform of the measure $\mu_{A_k,\Phi,L}$ at the point $\xi$ is given by
\begin{equation}
\label{ft-muj}
\ft{\mu}_{A_k,\Phi,L}(\xi) = \int \overline{e_\xi} \, d\mu_{A_k,\Phi,L} =
\sum_{\Psi}\sum_{\F_r}  \eps_r \eps_{r+1} \cdots\eps_{d-1}\int_{F_r} \overline{e_\xi},
\end{equation}
where $\Psi$ runs through all $r$-flags that can be obtained from $\Phi$
by translation along a vector from $L$, $\F_r$ goes through all $r$-sequences of faces
$F_r \subset F_{r+1} \subset \cdots \subset F_d$
of the polytope $A_k$ that are contained in $\Psi$, 
the $\varepsilon_j$'s are the $\pm 1$ coefficients associated to the $r$-sequence
$\F_r$ with respect to $\Psi$, and the integral on the right hand side
is taken with respect  to the $r$-dimensional volume measure on the face $F_r$.

Let $\partial F_r$ denote the relative boundary of the face $F_r$, and for each
$x \in \partial F_r$ let $n(x)$ be a vector in the linear subspace 
$V_r$ which is outward unit normal  to $F_r$ at the point $x$.
Then for every $v \in V_r$ we have
\begin{equation}
\label{div-thm}
-2\pi i \dotprod{\xi}{v} 
\int_{F_r} \overline{e_\xi} = \int_{\partial F_r} \dotprod{n}{v} \overline{e_\xi},
\end{equation}
which follows by applying the divergence theorem to the 
function $f(x) = v \, \overline{e_\xi (x)}$ over the face $F_r$.
The relative boundary $\partial F_r$ consists of a finite number
of $(r-1)$-dimensional faces $F_{r-1}$ of $F_r$.
Hence, using \eqref{ft-muj} and \eqref{div-thm}, we get
\begin{align}
\label{sum-in-r-1-flags-a}
&-2\pi i \dotprod{\xi}{v}  \ft{\mu}_{A_k,\Phi,L}(\xi) 
= \sum_{\Psi}\sum_{\F_r}  \eps_r \eps_{r+1}  \cdots\eps_{d-1} \int_{\partial F_r}
 \dotprod{n}{v} \overline{e_\xi} \\
\label{sum-in-r-1-flags-b}
&\qquad = \sum_{\Psi} \sum_{\F_r} \eps_r \eps_{r+1}  \cdots\eps_{d-1}
 \sum_{F_{r-1}} \dotprod{n}{v} \int_{ F_{r-1}} \, \overline{e_\xi},
\end{align}
where $F_{r-1}$ goes through the $(r-1)$-dimensional faces of 
the $r$-dimensional face $F_r$ from  the sequence $\F_r$,
 and $n$ is the outward unit normal to $F_r$ on $F_{r-1}$.

Let $\E$ be the collection of all the $(r-1)$-sequences  of faces 
$F_{r-1} \subset F_r \subset \cdots \subset F_d$ that belong to one
of the polytopes $A_k$ (where $k$ may be different from one
sequence to another, so that $k$ is not fixed), and such that the $r$-subsequence
$F_r \subset F_{r+1} \subset \cdots \subset F_d$ is contained in some
$r$-flag $\Psi$ that can be obtained from $\Phi$
by translation along a vector from $L$.
We define an equivalence relation on $\E$ by saying that two elements 
$\F_{r-1}$ and $\F'_{r-1}$ from $\E$ are equivalent
if the affine hull of the $(r-1)$-dimensional face $F_{r-1}$ from the sequence $\F_{r-1}$ 
can be translated by a vector in $L$ onto the affine hull of the $(r-1)$-dimensional face 
$F'_{r-1}$ from $\F'_{r-1}$. Then $\E$ can be partitioned  into a finite number of
equivalence classes $\E^1, \E^2, \dots, \E^N$ induced
by this  equivalence relation.

To each equivalence class $\E^l$ $(1 \leq l \leq N)$ we associate an $(r-1)$-flag $\Phi^l$, 
defined in the following way. The flag $\Phi^l$ is 
determined by a sequence of affine subspaces  
\[
V_{r-1}^l \subset V_r \subset V_{r+1} \subset \cdots  \subset V_d = \R^d,
\]
where $V_r, V_{r+1}, \dots, V_d$ are the same linear subspaces from \eqref{eq:D1.1} that 
determine the $r$-flag $\Phi$, while $V^l_{r-1}$ is a new affine subspace of
dimension $r-1$. The subspace $V^l_{r-1}$  is chosen in such a way that the set
$V^l_{r-1} + L$ contains all the $(r-1)$-dimensional faces $F_{r-1}$ belonging to
sequences  $\F_{r-1}$ from the equivalence class $\E^l$. It is straightforward to verify
that such a choice of $V^l_{r-1}$  is indeed possible, thanks to the definition of the 
collection $\E$ and the way in which the equivalence relation on $\E$ was defined.
 We endow the $(r-1)$-flag $\Phi^l$ with a choice of positive and negative half-spaces, 
by saying that the positive and negative half-spaces of $V_{j+1}$ determined
by the subspace $V_j$ coincide with those from the $r$-flag $\Phi$
for all $r \leq j \leq d-1$; while the positive and negative half-spaces 
of $V_r$ that are determined by the new subspace $V^l_{r-1}$
are selected in an arbitrary way.

For each $1 \leq l \leq N$, let $\sigma^l$ denote the (unique) unit vector in the linear subspace 
$V_r$ which is normal to $V^l_{r-1}$ and is pointing towards the negative half-space 
of $V_r$ determined  by $V^l_{r-1}$. We then observe that for any given $k$, the sum in
\eqref{sum-in-r-1-flags-b} is equal to
\begin{equation}
\label{sum-in-r-1-flags-c}
\sum_{l=1}^{N} \dotprod{\sigma^l}{v}
\sum_{\Theta} \sum_{\F_{r-1}} 
 \eps_{r-1} \eps_r \eps_{r+1}  \cdots\eps_{d-1}\int_{ F_{r-1}} \, \overline{e_\xi},
\end{equation}
where $\Theta$ runs through all $(r-1)$-flags that can be obtained from $\Phi^l$
by translation along a vector from $L$, $\F_{r-1}$ goes through all $(r-1)$-sequences of faces
$F_{r-1} \subset F_r \subset \cdots \subset F_d$
of the polytope $A_k$ that are contained in $\Theta$, and
the $\varepsilon_j$'s are the $\pm 1$ coefficients associated to the $(r-1)$-sequence
$\F_{r-1}$ with respect to $\Theta$. Hence
\eqref{sum-in-r-1-flags-a}, \eqref{sum-in-r-1-flags-b} and \eqref{sum-in-r-1-flags-c}
give that
\begin{equation}
\label{eq:D1.19}
-2\pi i \dotprod{\xi}{v}  \ft{\mu}_{A_k,\Phi,L}(\xi) =
\sum_{l=1}^{N} \dotprod{\sigma^l}{v} \, \ft{\mu}_{A_k,\Phi^l,L}(\xi),
\end{equation}
for each $k$. Finally, by multiplying both sides of \eqref{eq:D1.19} by $m_k$ and then
taking the sum with respect to $k$, we conclude that
\begin{equation}
\label{eq:D1.20}
-2\pi i \dotprod{\xi}{v}  \ft{\mu}_{P,\Phi,L}(\xi) 
= \sum_{l=1}^{N} \dotprod{\sigma^l}{v}  \ft{\mu}_{P,\Phi^l,L}(\xi)
\end{equation}
for every $\xi \in \R^d$ and every $v \in V_r$.

Now recall that it remained to show that \eqref{eq:C1.1} is true 
for every $\xi \in L^* \setminus V_r^\perp$.
Since $\xi \in L^*$,  it follows from the inductive hypothesis that 
$\ft{\mu}_{P,\Phi^l,L}(\xi) = 0$ for each $1 \leq l \leq N$.
On the other hand, since $\xi\notin V_r^\perp$ there exists a vector 
 $v \in V_r$ such that $\dotprod{\xi}{v} \neq 0$. Hence
\eqref{eq:C1.1} follows from the equality \eqref{eq:D1.20},
and so the proof is complete.
\end{proof}

\subsection{}
Based on \lemref{lemC1.1} we next establish the following result:

\begin{lem}
\label{lemC1.2}
Let $P$ be an element  of the polytope group $\P^d$, and let
$L$ be a lattice in $\R^d$. Assume that
$H_{\Psi}(P,L) = 0$ for every $s$-flag $\Psi$
 $(0 \leq s \leq d-1)$. Then 
$\ft{\chi}_{P}(\xi) = 0$
for all $\xi \in L^* \setminus \{0\}$.
\end{lem}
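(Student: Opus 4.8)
The plan is to apply Lemma~\ref{lemC1.1} at the top level $r = d-1$, together with the definition of a $d$-flag and the structure of the flag measures, in order to reach the Fourier transform of $\chi_P$ itself.

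First I would specialize Lemma~\ref{lemC1.1}. The hypothesis here is that $H_{\Psi}(P,L) = 0$ for every $s$-flag $\Psi$ with $0 \le s \le d-1$; in particular this covers every $s$-flag with $0 \le s \le r$ when we take $r = d-1$. So the lemma applies with $r = d-1$ and yields $\ft{\mu}_{P,\Phi,L}(\xi) = 0$ for all $\xi \in L^*$ and for \emph{every} $(d-1)$-flag $\Phi$. The remaining task is to convert this statement about $(d-1)$-flag measures into the desired statement about $\ft{\chi}_P$, which is the Fourier transform of the measure $\nu_{P,\Phi',L}$ associated to a $d$-flag $\Phi'$ (recall from the preliminaries that for a $d$-flag $\nu_{P,\Phi'}$ is exactly the absolutely continuous measure $\chi_P(x)\,dx$).

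The key step is therefore to run \emph{one more} iteration of the divergence-theorem identity from the proof of Lemma~\ref{lemC1.1}, but now at the level $r = d$. Concretely, I would fix $\xi \in L^* \setminus \{0\}$, take $\Phi'$ to be the unique $d$-flag (the single subspace $V_d = \R^d$), and repeat the computation that produced \eqref{eq:D1.20}: writing $P = \sum_k m_k[A_k]$, apply the divergence theorem to $f(x) = v\,\overline{e_\xi(x)}$ on each top-dimensional cell $A_k$ to express $-2\pi i \dotprod{\xi}{v}\,\ft{\chi}_{A_k}(\xi)$ as a sum of boundary integrals over the facets, then organize those facets into equivalence classes according to their affine hulls modulo $L$, producing finitely many $(d-1)$-flags $\Phi^l$ and unit normals $\sigma^l$. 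This gives an identity of exactly the same shape as \eqref{eq:D1.20}, namely
\begin{equation*}
-2\pi i \dotprod{\xi}{v}\,\ft{\chi}_P(\xi) = \sum_{l=1}^{N} \dotprod{\sigma^l}{v}\,\ft{\mu}_{P,\Phi^l,L}(\xi),
\end{equation*}
valid for every $v \in \R^d$. Since each $\Phi^l$ is a $(d-1)$-flag, the right-hand side vanishes by the specialization of Lemma~\ref{lemC1.1} noted above. Because $\xi \ne 0$, I may choose $v = \xi$ so that $\dotprod{\xi}{v} = |\xi|^2 \ne 0$, and conclude $\ft{\chi}_P(\xi) = 0$.

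I expect the main obstacle to be \emph{bookkeeping} rather than any genuinely new idea: one must verify that the divergence-theorem argument, which in Lemma~\ref{lemC1.1} was carried out for an $r$-flag with $r \ge 1$ and used the ambient subspace $V_r$ to supply the test directions $v$, goes through unchanged when $r = d$, where the role of $V_r$ is played by all of $\R^d$ and the ``faces'' $F_r$ are the full-dimensional cells $A_k$ themselves. In particular I would check that the equivalence-class construction of the $(d-1)$-flags $\Phi^l$ and the normals $\sigma^l$ is legitimate in this top-dimensional case, and that the hypothesis indeed delivers $\ft{\mu}_{P,\Phi^l,L}(\xi) = 0$ for these flags. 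Since this is only the base mechanism of Lemma~\ref{lemC1.1} applied one level higher, the cleanest exposition is probably to observe that the proof of Lemma~\ref{lemC1.1} in fact establishes \eqref{eq:D1.20} for all $r$ with $1 \le r \le d$ (the $d$-flag case being formally identical), and then simply invoke that identity at $r = d$.
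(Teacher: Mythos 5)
Your proposal is correct and follows essentially the same route as the paper: the paper's own proof likewise observes that the divergence-theorem argument of Lemma~\ref{lemC1.1} (in particular the identity \eqref{eq:D1.20}) remains valid at the top level $r=d$, where $\mu_{P,\Phi,L}$ for the $d$-flag is just $\chi_P(x)\,dx$, and that this step only consumes the hypothesis $H_{\Psi}(P,L)=0$ for $s$-flags with $s\leq d-1$ (via the case $r=d-1$ of Lemma~\ref{lemC1.1}), exactly as you argue. Your explicit check that the bookkeeping with the $(d-1)$-flags $\Phi^l$, the normals $\sigma^l$, and the choice $v=\xi$ goes through at $r=d$ is precisely the content of the paper's remark that ``the arguments in the proof of Lemma~\ref{lemC1.1} are valid also in the case when $r=d$.''
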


\begin{proof}
Recall that by a $d$-flag we mean the sequence which consists of just 
one subspace $V_d = \R^d$. If $\Phi$ is a $d$-flag, then the measure
$\mu_{P,\Phi,L}$ is equal to $\chi_P(x) dx$
(that is, an absolutely continuous measure with density $\chi_P$).
Hence, the assertion of \lemref{lemC1.2} can be equivalently stated
by saying  that \eqref{eq:C1.1} holds for all $\xi \in L^* \setminus V_d^\perp$.

The arguments in the proof of \lemref{lemC1.1} are valid also in the 
case when $r=d$. In particular this is true for the second part of the proof,
where it is shown (based on the inductive hypothesis) that \eqref{eq:C1.1} holds for every 
$\xi \in L^* \setminus V_r^\perp$. Notice that the assumption
that $H_{\Psi}(P,L) = 0$ for every $s$-flag $\Psi$ was used in
that part of the proof only for 
$0 \leq s \leq r-1$ (and not for $s = r$). Hence the
proof applies to our present situation as well, and allows us to conclude
that \eqref{eq:C1.1} is indeed true for all $\xi \in L^* \setminus \{0\}$.
\end{proof}

\subsection{}
We can now finish the proof of \thmref{thmC3.1}:

\begin{proof}[Proof of \thmref{thmC3.1}]
Assume first that $P$ tiles at level $k$ by translations with respect to $L$.
Let $D$ be a fundamental parallelepiped  of $L$, then the element
$P' := P - k [D]$ of $\P^d$ tiles at level zero by translations with respect to $L$.
Hence by \propref{propE1.2} we can represent $P'$ as a finite sum 
$P' = \sum ([A_j] - [A'_j])$ where
$A_j$, $A'_j$ are polytopes such that $A'_j$ is obtained from $A_j$ by
translation along a vector from $L$. Since the Hadwiger
functionals $H_\Phi(\,\cdot\, , L)$ are additive and
invariant with respect to translations by vectors from $L$, it follows
that $H_\Phi(P', L) = 0$ for every $r$-flag $\Phi$ $(0 \leq r \leq d-1)$.
It is also easy to check that $H_\Phi(D, L) = 0$ for every such $\Phi$
(in fact, this follows from the proof of \corref{corA2.1}).
Since $P = P' + k[D]$, using additivity again implies that $H_\Phi(P, L) = 0$ for every $\Phi$,
which proves one part of \thmref{thmC3.1}.

To prove the converse part,  suppose that 
$H_{\Phi}(P,L) = 0$ for every $r$-flag $\Phi$ $(0 \leq r \leq d-1)$.
Then we apply \lemref{lemC1.2} and obtain that $\ft{\chi}_{P}(\xi) = 0$
for all $\xi \in L^* \setminus \{0\}$. This implies, by \propref{propD1.4},
that $P$ tiles at some level $k$ by translations with respect to $L$,
and thus concludes the proof of \thmref{thmC3.1}.
\end{proof}


\section{Remark}

Equidecomposability using translations by vectors from a lattice $L$,
can easily be seen to constitute  an equivalence relation on the set of
all polytopes in $\R^d$. \thmref{thmA2.1} characterizes
 the polytopes which lie in the same equivalence class with respect
to this relation:  \emph{two polytopes  $A$ and $B$ are equidecomposable
using translations by vectors from $L$, if and only if 
$H_{\Phi}(A,L) = H_{\Phi}(B,L)$
for every $r$-flag $\Phi$ $(0 \leq r \leq d)$}.

(We remind the reader that the meaning of the latter condition  in the case when $r=d$
 is  that $A$ and $B$ have the same volume.)

The condition that $H_{\Phi}(P,L) = H_{\Phi}(Q,L)$ for every $\Phi$, in fact makes
sense for any two elements  $P$ and $Q$ of the polytope group $\P^d$.
This  extends the equivalence relation described above to all the elements of 
the polytope group $\P^d$. One may thus wonder whether the equivalence classes with 
respect to this relation admit any geometric description, in a way that generalizes
the notion of equidecomposability of polytopes.

Based on \thmref{thmC3.1}, we can obtain an affirmative answer to this
question. The following version of \thmref{thmA2.1} is true:

\begin{thm}
\label{thmE1.1}
Let $P$ and $Q$ be two elements of the polytope group $\P^d$, and let $L$ be a lattice in $\R^d$. 
Then the following two conditions are equivalent:
\begin{enumerate-math}
	\item \label{thmE1.1.1}
	$P$ and $Q$ can be represented in the form
	\begin{equation}
	\label{eq:E2.2.1}
	P = R + [A_1] + \dots + [A_n],
	\quad
	Q = R +  [B_1] +\dots+ [B_n],
	\end{equation}
	where $R$ is an element of the polytope group $\P^d$, $A_j, B_j$ are polytopes, and
	$B_j$ can be obtained from $A_j$ by translation 
	along some vector $\lam_j \in L$;
	\item \label{thmE1.1.2}
	$H_{\Phi}(P,L) = H_{\Phi}(Q,L)$ for every $r$-flag $\Phi$ $(0 \leq r \leq d)$.
\end{enumerate-math}
\end{thm}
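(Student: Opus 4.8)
The direction \ref{thmE1.1.1} $\Rightarrow$ \ref{thmE1.1.2} should be immediate, so let me sketch it first. If $P$ and $Q$ have the form \eqref{eq:E2.2.1}, then since each Hadwiger functional $H_\Phi(\,\cdot\,,L)$ is additive and invariant under translations by vectors from $L$ (and this holds for the $d$-flag as well, where $H_\Phi$ is just the volume), I would compute
$$
H_\Phi(P,L) - H_\Phi(Q,L) = \sum_{j=1}^{n}\big(H_\Phi([A_j],L) - H_\Phi([B_j],L)\big) = 0,
$$
because $B_j$ is a lattice translate of $A_j$. This gives \ref{thmE1.1.2} for every $r$-flag $\Phi$ with $0 \le r \le d$.

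The plan for the harder direction \ref{thmE1.1.2} $\Rightarrow$ \ref{thmE1.1.1} is to reduce it to the already-proved machinery by considering the difference $P - Q$. Set $P_0 := P - Q$. The hypothesis \ref{thmE1.1.2}, read for $0 \le r \le d-1$, says exactly that $H_\Phi(P_0,L) = 0$ for all such $\Phi$; read for $r = d$, it says $\vol(P_0) = 0$. By \thmref{thmC3.1}, the vanishing of all $H_\Phi(P_0,L)$ for $0 \le r \le d-1$ implies that $P_0$ tiles at some level $k$ by translations with respect to $L$. Then \propref{propD1.1} gives $\vol(P_0) = k\det(L)$, and since $\vol(P_0)=0$ we conclude $k = 0$, i.e.\ $P_0 = P - Q$ tiles at level zero. (This is precisely the step that makes the $r=d$ part of the hypothesis essential, and it mirrors the argument already used in the proof of \thmref{thmA2.1}.)

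Now I would invoke \propref{propE1.2}, which is stated for an arbitrary element of the polytope group tiling at level zero. It yields a representation
$$
P - Q = \sum_{j=1}^{n}\big([A_j] - [A'_j]\big),
$$
where $A_j, A'_j$ are polytopes and $A'_j$ is a translate of $A_j$ by a vector $\lam_j \in L$. Writing $B_j := A'_j$, this rearranges to
$$
P = Q + \sum_{j=1}^{n}\big([A_j] - [B_j]\big),
$$
from which I can read off the desired form \eqref{eq:E2.2.1}: take $R := Q$ (equivalently $R := Q - \sum_j [B_j]$ to keep the expressions symmetric, whichever makes the bookkeeping cleanest), so that $P = R + \sum_j [A_j]$ and $Q = R + \sum_j [B_j]$ with each $B_j$ a lattice translate of $A_j$. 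This establishes \ref{thmE1.1.1} and closes the equivalence.

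I do not anticipate a genuine obstacle here, since all the real content has already been packaged into \thmref{thmC3.1}, \propref{propD1.1}, and \propref{propE1.2}; the only point requiring a moment's care is the bookkeeping that converts the level-zero tiling representation from \propref{propE1.2} into the symmetric form \eqref{eq:E2.2.1} by an appropriate choice of the common element $R$.
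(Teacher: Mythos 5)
Your proposal is correct and follows essentially the same route as the paper's proof: pass to $P-Q$, use \thmref{thmC3.1} together with \propref{propD1.1} to conclude that $P-Q$ tiles at level zero, then invoke \propref{propE1.2} and take $R = P - \sum_j [A_j] = Q - \sum_j [B_j]$. One small slip: choosing $R := Q$ is \emph{not} equivalent to choosing $R := Q - \sum_j [B_j]$ (only the latter yields \eqref{eq:E2.2.1}), but you state and use the correct choice in your final displayed identities, so this does not affect the argument.
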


\begin{proof}
Since the Hadwiger functionals $H_\Phi(\,\cdot\, , L)$ are additive and
invariant with respect to translations by vectors from $L$, it is obvious
that \ref{thmE1.1.1} implies \ref{thmE1.1.2}.  In order to prove the converse,
 suppose  that \ref{thmE1.1.2} holds. Then by additivity we have 
$H_\Phi(P-Q, L) = 0$ for every $r$-flag $\Phi$ $(0 \leq r \leq d)$.
Using \thmref{thmC3.1} we deduce that $P-Q$ tiles at some
level $k$ by translations with respect to $L$; and since $P-Q$ has
volume zero, it follows from \propref{propD1.1} that  the tiling level $k$ must be zero.
By \propref{propE1.2} we can therefore represent $P-Q$ in the form $P-Q = \sum_{j=1}^{n}
 ([A_j] - [B_j])$, where $A_j, B_j$ are polytopes such that
$B_j$ is obtained from $A_j$ by translation along a vector from $L$. 
Hence \eqref{eq:E2.2.1} can be satisfied by taking
$R = P - \sum [A_j] = Q - \sum [B_j]$, so condition \ref{thmE1.1.1} is established.
\end{proof}

\begin{remark}
\thmref{thmE1.1} can be viewed as a version of \thmref{thmA2.1} that
applies to all the elements of the polytope  group $\P^d$ (and not just to polytopes).
However, notice that in the special case when $P$ and $Q$ are polytopes, \thmref{thmE1.1}
gives a weaker result than \thmref{thmA2.1}. This is because
the condition \ref{thmE1.1.1} in  \thmref{thmE1.1} does not say that if $P$ and $Q$ are 
polytopes then the element $R$ in \eqref{eq:E2.2.1} can in fact be chosen to be zero.
\end{remark}


\end{document}